\documentclass[12pt,a4paper]{amsart}

\usepackage{amsmath,amssymb,amsfonts}
\usepackage[utf8]{inputenc}
\usepackage[T1]{fontenc}
\usepackage{newtxtext,newtxmath,mathtools}
\usepackage[colorlinks=true,linkcolor=teal,citecolor=magenta,urlcolor=blue]{hyperref}
\usepackage[margin=20mm,top=28mm]{geometry}
\usepackage{enumerate}
\usepackage{tikz}

\newtheorem{theorem}{Theorem}[section]
\newtheorem{corollary}[theorem]{Corollary}
\newtheorem{lemma}[theorem]{Lemma}
\newtheorem{proposition}[theorem]{Proposition}
\theoremstyle{definition}
\newtheorem{example}[theorem]{Example}

\newtheorem{remark}[theorem]{Remark}

\newcommand{\K}{\Bbbk}
\newcommand{\Hilb}{\operatorname{Hilb}}

\newcommand{\Hom}{\operatorname{Hom}}
\newcommand{\GKdim}{\operatorname{GKdim}}
\newcommand{\ord}{\operatorname{ord}}
\newcommand{\coker}{\operatorname{coker}}
\newcommand{\length}{\operatorname{length}}

\renewcommand{\leq}{\leqslant}
\renewcommand{\geq}{\geqslant}
\numberwithin{equation}{section}
\title{Local Smith Profiles of Twisted Calabi--Yau Algebras}

\author{Atabey Kaygun}
\address{Istanbul Technical University, Istanbul, Turkey}
\email{kaygun@itu.edu.tr}

\begin{document}

\begin{abstract}
  The matrix Hilbert series of a locally finite elementary twisted Calabi--Yau algebra is
  the inverse of a matrix polynomial.  The Smith normal form of this polynomial over the
  power series ring at $x=1$ produces a finite list of local exponents refining the
  Gelfand--Kirillov dimension, which records only the largest of them.  We show that the
  Calabi--Yau symmetry makes this local data rigid: the algebra decomposes into ring factors
  according to the average Artin--Schelter index along Nakayama cycles, and after a local
  normalization the symmetry induces a nonsingular linking form on the Smith cokernel
  together with a finite-order Nakayama action on its layers, forcing reciprocal-eigenvalue
  and parity constraints on the multiplicities of the exponents.  We compute the complete
  local data for cyclic skew-group algebras, derive a parity sieve for quiver
  classifications in dimension three, and realize, as an iterated smash product of a graded
  down-up algebra, a four-vertex type that a recent classification had left open.
\end{abstract}

\maketitle

\section{Introduction}

Twisted Calabi--Yau algebras are noncommutative analogues of coordinate rings of varieties
with trivial canonical bundle: the dualizing bimodule is invertible, and the failure of its
triviality is measured by the Nakayama automorphism
\cite{Ginzburg2006,ReyesRogalskiZhang2014}.  When the algebra is connected graded, these are
precisely the Artin--Schelter regular algebras \cite{ArtinSchelter1987,ReyesRogalski2022}.
Allowing instead a semisimple degree-zero part $A_0=\bigoplus_{i=1}^r\K e_i$, spanned by
primitive orthogonal idempotents $e_1,\ldots,e_r$, gives the \emph{elementary} twisted
Calabi--Yau algebras \cite{ReyesRogalski2019}: the quiver generalizations that arise in
noncommutative projective geometry \cite{GaddisRogalski2021}, in the representation theory
of quivers with potential \cite{Ginzburg2006,Bocklandt2008}, and in the skew-group algebras
of the McKay correspondence \cite{ReyesRogalskiZhang2014}.

Let $A$ be such an algebra, locally finite and positively graded, of dimension $d$.  Its
Calabi--Yau symmetry leaves a concrete trace on the matrix Hilbert series
\[
  C_A(x)=\bigl(\Hilb_x(e_iAe_j)\bigr)_{i,j=1}^r.
\]
Reyes--Rogalski \cite{ReyesRogalski2019}
show that $C_A(x)$ is the inverse of a polynomial homological denominator
$q_A(x)$ and that $q_A$ satisfies reciprocal and commutation identities
involving the Nakayama permutation and the vector AS-index.  We ask
what these Calabi--Yau identities force on the local Smith structure of
$q_A$ at $x=1$.  The determinant records only the total vanishing order there;
the individual elementary divisors retain finer information.  In particular, when the
growth is finite, the Gelfand--Kirillov dimension of $A$ is nothing but the largest local
exponent (Lemma~\ref{lem:cy-smith-growth}); the full profile, together with the Nakayama
action constructed below, is a strictly finer invariant obtained at no extra cost.

Put $t=x-1$, $R=\K\llbracket t\rrbracket$, and $M_A=q_A(1+t)$.  Ordinary Smith theory over
the DVR $R$ gives nonnegative exponents $a_1\leq\cdots\leq a_r$ and a finite torsion module
\[
  \mathcal G_A=\coker(M_A:R^r\to R^r)
  \cong\bigoplus_i R/(t^{a_i}).
\]
The Calabi--Yau functional identities make this local module more rigid than an arbitrary
Smith cokernel: they control how the AS-index enters locally and induce a finite-order
Nakayama symmetry on its exact elementary-divisor layers.

The first consequence is global.  If two Nakayama cycles have different average AS-indices,
then they belong to different direct-product factors
(Theorem~\ref{thm:cycle-average-decomposition}); hence every cycle of a ring-indecomposable
algebra has the same average.  Locally, the remaining variation in the AS-index can be
removed by a unitary diagonal change of basis
(Theorem~\ref{thm:local-as-index-normalization}).  After normalization the multiplier is
determined only by the Nakayama permutation and the common cycle average.

This normalized functional equation produces a nonsingular linking pairing on $\mathcal G_A$
(Theorem~\ref{thm:twisted-cy-linking}).  Passing to the exact Smith layers
\[
  V_a(A)=
  \frac{\mathcal G_A[t^a]}
       {\mathcal G_A[t^{a-1}]+t\mathcal G_A[t^{a+1}]}
\]
retains both the multiplicity of the elementary divisor $t^a$ and the induced finite-order
Nakayama action.  The auxiliary forms on these layers pair reciprocal eigenspaces and give
the congruence
\[
  m_{a,-}(A)\equiv(d+a)m_a(A)\pmod2.
\]
For an odd-order Nakayama permutation, every exponent whose parity is opposite to the
Calabi--Yau dimension, including the zero exponent, therefore occurs with even multiplicity.
These restrictions are finer than determinant parity or maximal pole order.

The final section makes the restrictions concrete.  Cyclic skew-group algebras admit a
Fourier diagonalization that computes the complete equivariant local profile.  In dimension
$3$ this yields a short parity sieve for the Gaddis--Rogalski classification
\cite{GaddisRogalski2021} (Subsection~\ref{ex:profile-obstruction}) and repackages one of
their growth exclusions in local Smith terms.  In four vertices the first starred
two-two-cycle candidate of Gaddis--Lamkin--Nguyen--Wright
\cite{GaddisLamkinNguyenWright2024} survives the local restrictions;
Subsection~\ref{ex:four-vertex-realization} constructs it explicitly as an iterated
$C_2$-smash product of the graded down-up algebra $D(0,-1)$
(Theorem~\ref{thm:starred-realization}).  The construction is independent of the obstruction
theory, while the resulting algebra realizes exactly the predicted local profile and
Nakayama-layer actions.

\subsection{Context and prior work}

The vector AS-index, polynomial homological denominator, and reciprocal identities used here
are due to Reyes--Rogalski \cite{ReyesRogalski2019}.  They also show that in the degree-one
indecomposable case the AS-index is scalar, whereas weighted examples show that scalarity
fails in general.  The cycle-average decomposition isolates the rigidity that remains
without the degree-one assumption; its relation to the average Gorenstein parameter of
Iyama--Kimura--Ueyama \cite{IyamaKimuraUeyama2026} is discussed in the remark following
Theorem~\ref{thm:cycle-average-decomposition}.

Local Smith forms and partial multiplicities for matrix polynomials are classical; see for
example Wilkening--Yu \cite{WilkeningYu2011}.  Palindromic and skew-symmetric structures
impose parity restrictions on those multiplicities \cite{MackeyMackeyMehlMehrmann2011}
\cite{MackeyMackeyMehlMehrmann2013}, and M\"obius transformations transport local Smith data
\cite{MackeyMackeyMehlMehrmann2015}.  The auxiliary forms below are the standard DVR
construction associated to linking pairings; compare Levine \cite{Levine1980} and Orson
\cite{Orson2015}.  What is new here is the interaction of these forms with the normalized
finite-order Nakayama action.  The classification applications use Gaddis--Rogalski
\cite{GaddisRogalski2021} and Gaddis--Lamkin--Nguyen--Wright
\cite{GaddisLamkinNguyenWright2024}; the four-vertex realization uses Le Meur's
smash-product theorem \cite{LeMeur2019}.

\subsection{Notation and conventions}

Throughout, $\K$ is a field of characteristic zero, $t=x-1$, $R=\K\llbracket t\rrbracket$,
and $F=\K((t))$.  The involution induced by $x\mapsto x^{-1}$ is denoted by $\iota$, so that
$\iota(t)=-t/(1+t)$; for a matrix $B$ over $F$ we write $B^\dagger=\iota(B)^T$.  For an $R$-module $T$, we will use
$T[t^a]$ to denote $\ker(t^a:T\to T)$.  We use $\ord_t(0)=+\infty$ and
$M\langle m\rangle_n=M_{n-m}$ for grading shifts.  For $c\in\mathbb Q$, $(1+t)^c$ denotes
its binomial series in $R$.

\section{Calabi--Yau denominators and AS-index rigidity}
\label{sec:cy-as-index}

\subsection{Polynomial denominator and local Smith data}

We keep the notation of the introduction: $A$ is a locally finite positively graded
elementary twisted Calabi--Yau algebra of dimension $d$ with $A_0=\bigoplus_{i=1}^r\K e_i$,
the matrix Hilbert series $C_A(x)$ is the inverse of the polynomial homological denominator
$q_A(x)$ \cite[Proposition~4.2(1)]{ReyesRogalski2019}, and $M_A=q_A(1+t)$.  Let $\mu$ be a
Nakayama automorphism and let $P$ be the permutation matrix of its action on the primitive
idempotents, with $P_{ij}=\delta_{\mu(i),j}$; write
$L=\operatorname{diag}(\ell_1,\ldots,\ell_r)$ for the AS-index matrix.  Reyes--Rogalski
prove that
\begin{equation}
\label{eq:rr-cy-functional}
  q_A(x)=(-1)^dPx^Lq_A(x^{-1})^T,
  \qquad
  q_A(x)Px^L=Px^Lq_A(x),
\end{equation}
see \cite[Proposition~6.4]{ReyesRogalski2019}.

Since $M_A\in M_r(R)\cap GL_r(F)$, ordinary Smith theory over the DVR
$R$ gives unique integers $0\leq a_1\leq\cdots\leq a_r$ and matrices $U,V\in GL_r(R)$ such
that
\begin{equation}
\label{eq:cy-local-smith-form}
  UM_AV=\operatorname{diag}(t^{a_1},\ldots,t^{a_r}),
\end{equation}
so that $\mathcal G_A=\coker(M_A)\cong\bigoplus_iR/(t^{a_i})$.  We call
$(a_1,\ldots,a_r)$ the local Smith profile of $A$ at $1$.
For $1\leq s\leq r$, let $\delta_s(q_A)$ be the least $t$-adic
valuation of a nonzero $s\times s$ minor of $M_A$, with $\delta_0=0$.
Then
\begin{equation}
\label{eq:cy-determinantal-orders}
  \delta_s(q_A)=a_1+\cdots+a_s.
\end{equation}
Set $\varepsilon_A(x)=\det q_A(x)$.  The Smith form gives
\[
  \length_R\mathcal G_A=\sum_i a_i
  =\ord_{x=1}\varepsilon_A(x).
\]

\begin{lemma}[Finite-growth Smith degree]
  \label{lem:cy-smith-growth}
  If $\GKdim A<\infty$, then
  \begin{equation}
    \label{eq:cy-gk-determinantal-order}
    \GKdim A= a_r =\delta_r(q_A)-\delta_{r-1}(q_A).
  \end{equation}
\end{lemma}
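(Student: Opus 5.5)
The second equality is formal. By \eqref{eq:cy-determinantal-orders} we have $\delta_r-\delta_{r-1}=a_r$. The determinant $\det M_A$ is nonzero, since $q_A$ is invertible over $F$, so $\delta_r$ is finite. Also $\delta_{r-1}$ is finite, because the Smith form exhibits nonzero $(r-1)$-minors. The content of the lemma is therefore $\GKdim A=a_r$, and I would prove it by reading $a_r$ off the pole order of $C_A=q_A^{-1}$ at $x=1$.

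\emph{Step 1: $a_r$ is the maximal pole order of the entries of $C_A$ at $x=1$.} From \eqref{eq:cy-local-smith-form} we get
\[
M_A^{-1}=V\operatorname{diag}(t^{-a_1},\ldots,t^{-a_r})U.
\]
Since $U,V\in GL_r(R)$, the minimum over entries of $\ord_t$ equals $-a_r$. Indeed, the minimum entry order is invariant under multiplication by matrices in $GL_r(R)$, and for the diagonal matrix it is $-a_r$. So some entry $\Hilb_x(e_iAe_j)$, expanded at $x=1$, has a pole of order exactly $a_r$, and no entry has a larger pole.

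\emph{Step 2: pole order equals growth.} Each $h_{ij}(x)=\Hilb_x(e_iAe_j)$ is a rational function with nonnegative coefficients, and $\Hilb_x(A)=\sum_{i,j}h_{ij}$. Finite GK dimension should force $\dim A_n$ to have polynomial growth. The denominator $\det q_A$ is a polynomial, so the poles of $h_{ij}$ lie at roots of $\varepsilon_A$. Polynomial growth of the nonnegative coefficients forces every pole to lie on the unit circle, and none can lie inside it. For nonnegative series, the pole at $x=1$ has order at least that of any pole on the unit circle (Pringsheim-type dominance, e.g.\ via $|h(x)|\le h(|x|)$ as $x\to$ a root of unity radially). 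Hence $\dim\bigoplus_{n\le N}A_n\sim cN^{k}$, where $k$ is the maximal pole order at $1$ of $\Hilb_x(A)$. For locally finite graded algebras generated in finitely many degrees one has $\GKdim A=k$ under this growth; here $A$ is finitely generated since it is twisted CY, hence homologically smooth.

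\emph{Step 3: no cancellation.} We need the pole order of the sum $\sum h_{ij}$ at $1$ to equal the maximum $a_r$ over the entries. The leading Laurent coefficient at $x=1$ of each $h_{ij}$ with maximal pole order is positive. This follows again from nonnegativity: $\lim_{x\to1^-}(1-x)^{a_r}h_{ij}(x)\ge0$, and it is nonzero by Step 1. Since these leading terms are nonnegative, they cannot cancel in the sum.

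The main obstacle is Step 2: justifying that finite GK dimension makes the growth exponent equal to the pole order at $1$. This requires ruling out dominant poles at other roots of unity, and deriving finite generation so that GK dimension equals the growth degree of $\sum_{n\le N}\dim A_n$. I would cite the standard fact for graded algebras with rational Hilbert series and nonnegative coefficients (cf.\ Reyes--Rogalski's treatment of GK dimension via $q_A$) rather than reprove it.
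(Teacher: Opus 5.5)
Your proposal is correct and is essentially the paper's argument. The paper also reads $a_r$ off as the largest pole order at $x=1$ among the entries of $M_A^{-1}=V\operatorname{diag}(t^{-a_i})U$, and for identifying growth with pole order it simply cites Reyes--Rogalski \cite[Lemma~2.7(1)--(2)]{ReyesRogalski2019}, which is the standard fact your Step~2 sketches. The only difference is cosmetic: the paper takes $\GKdim A$ as the maximum of the entries' growth rates (since $A=\bigoplus_{i,j}e_iAe_j$), so your positivity-of-leading-coefficients argument in Step~3 is a valid but slightly longer route to the same non-cancellation.
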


\begin{proof}
  Every entry of $C_A(x)=q_A(x)^{-1}$ is a rational series with nonnegative integral
  coefficients.  In the finite-growth case, Reyes--Rogalski
  \cite[Lemma~2.7(1)--(2)]{ReyesRogalski2019} identify the GK dimension of each nonzero
  entry with its pole order at $x=1$.  Since the Hilbert series of $A$ is the sum of the
  entries of $C_A$, $\GKdim A$ is the largest of these pole orders.  From
  \eqref{eq:cy-local-smith-form}, $M_A^{-1}=V\operatorname{diag}(t^{-a_i})U$; because $U$
  and $V$ are invertible over $R$, the largest pole order among its entries is $a_r$.  The
  final equality follows from \eqref{eq:cy-determinantal-orders}.
\end{proof}

\subsection{Functional equations and cycle averages}

The second identity in \eqref{eq:rr-cy-functional} already imposes a global restriction on
the AS-index vector.  For a cycle $C$ of the permutation $\mu$, define its average AS-index
by $\overline\ell_C=|C|^{-1}\sum_{i\in C}\ell_i$.  Partition the cycles by this value; for
each $c$, let $I_c$ be their union and put $e_c=\sum_{i\in I_c}e_i$.

\begin{theorem}[Nakayama-cycle average decomposition]
  \label{thm:cycle-average-decomposition}
  Every $e_c$ is central and
  \begin{equation}
    \label{eq:cycle-average-product}
    A\cong\prod_c e_cAe_c.
  \end{equation}
\end{theorem}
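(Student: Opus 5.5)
The plan is to show that $e_iAe_j=0$ whenever $i$ and $j$ lie in Nakayama cycles with different averages. Centrality of the $e_c$ and the product decomposition \eqref{eq:cycle-average-product} then follow formally. The only input is the second identity in \eqref{eq:rr-cy-functional}, transported from $q_A$ to the matrix Hilbert series $C_A$.

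\emph{Step 1: transfer the commutation identity to $C_A$.} The matrix $Px^L$ is invertible over $\K[x^{\pm1}]$, and $q_A$ is invertible over $\K((x))$ with inverse $C_A$. Multiplying $q_APx^L=Px^Lq_A$ on the left and on the right by $C_A$ gives
\[
  C_APx^L=Px^LC_A .
\]
With $P_{kj}=\delta_{\mu(k),j}$ we have $(Px^L)_{kj}=\delta_{\mu(k),j}x^{\ell_j}$. Comparing $(i,\mu(j))$ entries then yields
\[
  x^{\ell_{\mu(j)}}\,C_{ij}(x)=x^{\ell_{\mu(i)}}\,C_{\mu(i)\mu(j)}(x)
\]
for all $i,j$. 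I expect the main (if mild) obstacle to be getting the index convention right here. An error would only replace $\mu$ by $\mu^{-1}$, which does not affect the cycle averages.

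\emph{Step 2: iterate around the cycles.} Let $N$ be the order of $\mu$. Applying the relation $N$ times and telescoping gives
\[
  x^{\sum_{k=1}^{N}\ell_{\mu^k(j)}}\,C_{ij}(x)=x^{\sum_{k=1}^{N}\ell_{\mu^k(i)}}\,C_{ij}(x).
\]
Each exponent sum runs over $N/|C|$ full passes through the relevant cycle $C$. The two exponents are therefore $N\overline\ell_{C_j}$ and $N\overline\ell_{C_i}$, where $C_i$ and $C_j$ are the cycles containing $i$ and $j$. Suppose $\overline\ell_{C_i}\neq\overline\ell_{C_j}$. Then $(x^a-x^b)C_{ij}=0$ with $a\neq b$ in the domain $\K((x))$, so $C_{ij}=0$. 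Since $C_{ij}=\Hilb_x(e_iAe_j)$ has nonnegative coefficients counting dimensions, this forces $e_iAe_j=0$.

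\emph{Step 3: conclude.} Because $1=\sum_ce_c$, we get $A=\bigoplus_{c,c'}e_cAe_{c'}$. By Step 2 the summands with $c\neq c'$ vanish. Hence for every $a\in A$,
\[
  e_ca=e_ca\textstyle\sum_{c'}e_{c'}=e_cae_c=\sum_{c'}e_{c'}ae_c=ae_c,
\]
so each $e_c$ is central. The $e_c$ are then pairwise orthogonal central idempotents summing to $1$. The standard map $a\mapsto(e_cae_c)_c$ is therefore a graded algebra isomorphism $A\cong\prod_ce_cAe_c$.
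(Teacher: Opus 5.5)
Your proof is correct and is essentially the paper's argument. The paper raises $D(x)=Px^L$ to the $N$th power to get the diagonal matrix $x^K$ with $\kappa_j=N\overline\ell_C$, deduces from $[q_A,x^K]=0$ that $q_A$ (hence $C_A=q_A^{-1}$) is block diagonal, and then concludes $e_iAe_j=0$. You instead move the commutation to $C_A$ first and telescope entrywise around the cycles, which is the same computation organized slightly differently.
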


\begin{proof}
  Let $N$ be the order of $P$ and set $D(x)=Px^L$.  With our matrix convention,
  $Pe_j=e_{\mu^{-1}(j)}$.  Hence $D(x)^Ne_j=x^{\kappa_j}e_j$, where
  $\kappa_j=\sum_{q=0}^{N-1}\ell_{\mu^{-q}(j)}$.  If $j$ lies in a cycle $C$ of length $m$,
  each vertex of $C$ occurs $N/m$ times in this sum, so $\kappa_j=N\overline\ell_C$.  Thus
  $D(x)^N=x^K$ for a diagonal integral matrix
  $K=\operatorname{diag}(\kappa_1,\ldots,\kappa_r)$ whose entries are constant exactly on
  unions of cycles having the same average.

  Since $q_A$ commutes with $D(x)$, it commutes with $x^K$.  The $(i,j)$ entry of this
  identity is $(x^{\kappa_j}-x^{\kappa_i})q_A(x)_{ij}=0$.  Whenever $\kappa_i\neq\kappa_j$,
  the first factor is nonzero in $\K(x)$, hence $q_A(x)_{ij}=0$.  Therefore $q_A$ is block
  diagonal for the partition $\{I_c\}$, and so is its inverse $C_A$.

  By definition, $C_A(x)_{ij}=\Hilb_x(e_iAe_j)$.  A zero entry therefore means $e_iAe_j=0$.
  Thus $e_cAe_{c'}=0=e_{c'}Ae_c$ for $c\neq c'$, which makes each $e_c$ central and yields
  \eqref{eq:cycle-average-product}.
\end{proof}

Consequently, if $A$ is ring-indecomposable, all Nakayama cycles have the same average
AS-index, necessarily $\overline\ell_A=r^{-1}\sum_{i=1}^r\ell_i$.

\begin{remark}
  The arithmetic mean in Theorem~\ref{thm:cycle-average-decomposition} is the same averaging
  operation used in the average Gorenstein parameter of Iyama--Kimura--Ueyama
  \cite[Definition~4.3]{IyamaKimuraUeyama2026}, up to the usual left/right sign convention
  for grading shifts.  Their \cite[Theorem~4.7]{IyamaKimuraUeyama2026} concerns adjustment
  of individual parameters under graded Morita equivalence.  The theorem above instead
  follows directly from the denominator commutation relation and records how unequal cycle
  averages force an actual product decomposition.
\end{remark}

\subsection{Local normalization of the AS-index}

Assume for the rest of this subsection that $A$ is ring-indecomposable, and write
$\overline\ell_A$ for the common cycle average from
Theorem~\ref{thm:cycle-average-decomposition}.  Put $M=q_A(1+t)$ and $D=P(1+t)^L$.  Then
the two identities
\eqref{eq:rr-cy-functional} become
\begin{equation}
\label{eq:local-cy-functional-pre-normalization}
  M=(-1)^dDM^\dagger,
  \qquad MD=DM.
\end{equation}

\begin{theorem}[Local AS-index normalization]
  \label{thm:local-as-index-normalization}
  There exists a diagonal $G\in GL_r(R)$ with $G(0)=I_r$ and $G^\dagger=G^{-1}$ such that
  \begin{equation}
    \label{eq:local-as-index-normalization}
    G^{-1}DG=(1+t)^{\overline\ell_A}P.
  \end{equation}
\end{theorem}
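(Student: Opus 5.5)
We look for $G$ of the form $G=(1+t)^{S}$ with $S=\operatorname{diag}(s_1,\ldots,s_r)$ a rational diagonal matrix, using the binomial series $(1+t)^c\in R$ from the conventions. Such a $G$ is automatically diagonal, lies in $GL_r(R)$, and has $G(0)=I_r$. It also satisfies $G^\dagger=G^{-1}$. Indeed, $\iota(1+t)=(1+t)^{-1}$, because $\iota$ is induced by $x\mapsto x^{-1}$. Hence $\iota\bigl((1+t)^{s}\bigr)=(1+t)^{-s}$, which holds as an identity of binomial series since both sides are the unique power series with constant term $1$ whose logarithm is $-s\log(1+t)$. Transposition does nothing to a diagonal matrix, so $G^\dagger=(1+t)^{-S}=G^{-1}$.

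The whole problem therefore reduces to solving a linear system for $S$. We use the convention $P_{ij}=\delta_{\mu(i),j}$ and the fact that conjugating a diagonal matrix by $P$ permutes its entries: $P^{-1}\operatorname{diag}(v_i)P=\operatorname{diag}(v_{\mu(i)})$, up to fixing the direction of $\mu$ as in the proof of Theorem~\ref{thm:cycle-average-decomposition}. Then
\[
G^{-1}DG=(1+t)^{-S}P(1+t)^{L}(1+t)^{S}=P\,(1+t)^{-S^{\mu}+L+S},
\]
where $S^{\mu}$ denotes $S$ with entries permuted by $\mu$. The target $(1+t)^{\overline\ell_A}P$ is a scalar multiple of $P$, so it equals $P(1+t)^{\overline\ell_A}$. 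Comparing exponents, it suffices to solve
\[
s_i-s_{\mu(i)}=\overline\ell_A-\ell_i\qquad(1\le i\le r),
\]
with the sign or direction adjusted to the convention in force.

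This is a coboundary equation on each cycle of $\mu$. On a cycle $C=(i,\mu(i),\ldots,\mu^{m-1}(i))$, the sum of the left side over $C$ telescopes to $0$. The sum of the right side over $C$ is $m\overline\ell_A-\sum_{j\in C}\ell_j=m(\overline\ell_A-\overline\ell_C)$. This vanishes precisely because, by ring-indecomposability and Theorem~\ref{thm:cycle-average-decomposition}, every cycle has $\overline\ell_C=\overline\ell_A$. Given this compatibility, we solve on each cycle by setting $s_i=0$ at one chosen vertex and defining the remaining $s_j$ recursively along the cycle. The recursion closes up consistently because the right-hand sides sum to zero. The resulting $s_j$ are rational, so the binomial series $(1+t)^{s_j}$ make sense in characteristic zero.

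The only delicate point is bookkeeping rather than substance: we must get the direction of the permutation in $P^{-1}\Lambda P$ right relative to $P_{ij}=\delta_{\mu(i),j}$, and check the identity $\iota((1+t)^s)=(1+t)^{-s}$ for non-integral $s$. Both are settled once written out, and the cycle-sum compatibility is exactly the hypothesis supplied by the earlier theorem.
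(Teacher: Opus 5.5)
Your proposal is correct and follows essentially the same route as the paper: you take $G=\operatorname{diag}((1+t)^{s_1},\ldots,(1+t)^{s_r})$, get $G^\dagger=G^{-1}$ from $\iota(1+t)=(1+t)^{-1}$, and solve the cycle-wise coboundary equation, whose consistency is exactly the equality of cycle averages from Theorem~\ref{thm:cycle-average-decomposition}. To fix the direction you hedged on: with $P_{ij}=\delta_{\mu(i),j}$ one has $P^{-1}\operatorname{diag}(v_j)P=\operatorname{diag}(v_{\mu^{-1}(j)})$, so the equation is $s_j+\ell_j-s_{\mu^{-1}(j)}=\overline\ell_A$, as in the paper, and your telescoping argument works unchanged in either direction.
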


\begin{proof}
  For each vertex choose a rational number $s_i$ satisfying
  $s_j+\ell_j-s_{\mu^{-1}(j)}=\overline\ell_A$.  Such a choice can be made cycle by cycle:
  choose one $s_i$ arbitrarily and solve successively around the cycle.  The final equation
  is consistent precisely because the sum of the $\ell_i$ on that cycle is its length times
  $\overline\ell_A$.

  Set $G=\operatorname{diag}((1+t)^{s_1},\ldots,(1+t)^{s_r})$.  Characteristic zero makes
  every rational binomial power a well-defined unit of $R$, and $G(0)=I_r$.  Since the
  involution sends $1+t$ to $(1+t)^{-1}$, one has $G^\dagger=G^{-1}$.  On the basis vector
  $e_j$, $G^{-1}DG$ acts by the scalar $(1+t)^{s_j+\ell_j-s_{\mu^{-1}(j)}}$ followed by $P$,
  proving \eqref{eq:local-as-index-normalization}.

\end{proof}

For later use, set $M^\sharp=G^{-1}MG$ and $D^\sharp=G^{-1}DG$.  Conjugating
\eqref{eq:local-cy-functional-pre-normalization} and using $G^\dagger=G^{-1}$ gives
$M^\sharp=(-1)^dD^\sharp(M^\sharp)^\dagger$ and $M^\sharp D^\sharp=D^\sharp M^\sharp$.
Since $G\in GL_r(R)$, the matrices $M$ and $M^\sharp$ have the same local Smith profile, and
$[v]\mapsto[G^{-1}v]$ identifies their cokernels.

The theorem explains why the individual entries of $L$ disappear from the exact-layer
restrictions below.  After a unitary local change of basis, the Calabi--Yau multiplier
depends on the AS-index only through the common average.  No integrality of
$\overline\ell_A$ is required locally.  If $N$ is the order of $P$, however,
$N\overline\ell_A$ is an integer because $N$ is divisible by the length of every Nakayama
cycle.

\section{Nakayama-equivariant local Smith theory}
\label{sec:twisted-cy}

\subsection{The local linking form}

Throughout this section, $A$ is a locally finite positively graded elementary twisted
Calabi--Yau algebra of dimension $d$ as in Section~\ref{sec:cy-as-index}.  We first assume
that $A$ is ring-indecomposable so that Theorem~\ref{thm:local-as-index-normalization}
applies; the parity statements extend componentwise to finite products.  We replace $(M,D)$
by the normalized pair of Theorem~\ref{thm:local-as-index-normalization} and drop the
superscript $\sharp$.  Thus $D=(1+t)^{\overline\ell_A}P$, $D^\dagger=D^{-1}$, and
\begin{equation}
\label{eq:local-cy-functional}
  M=(-1)^dDM^\dagger,
  \qquad MD=DM.
\end{equation}
The second identity lets $D$ descend to an $R$-linear automorphism
\[
\nu_A:\mathcal G_A\longrightarrow\mathcal G_A,
  \qquad
  \nu_A([v])=[Dv].
\]
For $v,w\in R^r$, set
\begin{equation}
\label{eq:twisted-linking-form}
  \langle[v],[w]\rangle_A
  :=v^\dagger M^{-1}w\pmod R.
\end{equation}

\begin{theorem}[Local linking form]
  \label{thm:twisted-cy-linking}
  The pairing \eqref{eq:twisted-linking-form} is well defined and nonsingular.
\end{theorem}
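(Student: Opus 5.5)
We prove that the pairing \eqref{eq:twisted-linking-form} is well defined and then that it is nonsingular, using only the normalized identities \eqref{eq:local-cy-functional}.

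\textbf{Well-definedness.} The pairing takes values in $F/R$, and we must show it does not depend on the representatives. In the second variable, replacing $w$ by $w+Mu$ with $u\in R^r$ changes the value by $v^\dagger M^{-1}Mu=v^\dagger u\in R$, because $\iota$ preserves $R$. In the first variable, replacing $v$ by $v+Mu$ changes the value by $(Mu)^\dagger M^{-1}w=u^\dagger M^\dagger M^{-1}w$. We rewrite this using the functional equation. From $M=(-1)^dDM^\dagger$ and $D^\dagger=D^{-1}$ we get $M^\dagger=(-1)^dD^{-1}M$, hence
\[
M^\dagger M^{-1}=(-1)^dD^{-1}.
\]
Since $(1+t)^{\overline\ell_A}$ is a unit of $R$ and $P$ is a permutation matrix, $D^{-1}\in GL_r(R)$. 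The change is therefore $(-1)^du^\dagger D^{-1}w\in R$. Along the way we also check that $\iota$ is a well-defined continuous automorphism of $R$ and of $F$: it sends $t$ to $-t/(1+t)$, which has valuation one, so it preserves $R$. Thus $B\mapsto B^\dagger$ makes sense on matrices over $F$.

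\textbf{Nonsingularity.} The pairing is biadditive; it is $R$-linear in $w$ and $\iota$-semilinear in $v$. We show that the adjoint map $\mathcal G_A\to\Hom_R(\mathcal G_A,F/R)$, twisted by $\iota$ on the first slot, is injective. Since both sides have the same finite $\K$-dimension $\sum a_i$ (Matlis duality over a DVR), injectivity gives bijectivity. Suppose $v^\dagger M^{-1}w\in R$ for all $w\in R^r$. Then the row vector $v^\dagger M^{-1}$ lies in $R^{1\times r}$. Applying $\dagger$, the column vector $(M^{-1})^\dagger v=(M^\dagger)^{-1}v$ lies in $R^r$. Using $M^\dagger=(-1)^dD^{-1}M$, this vector equals $(-1)^dM^{-1}Dv$, so $M^{-1}Dv\in R^r$ and $Dv\in MR^r$. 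Since $D$ commutes with $M$ and is invertible over $R$, we have $v=D^{-1}(Dv)\in D^{-1}MR^r=MD^{-1}R^r=MR^r$. Hence $[v]=0$.

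The same argument applies to the right kernel. If $v^\dagger M^{-1}w\in R$ for all $v$, then $M^{-1}w\in R^r$, because $v^\dagger$ ranges over all of $R^{1\times r}$ as $v$ does ($\iota$ is bijective on $R$). Then $w\in MR^r$.

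The main point is the identity $M^\dagger M^{-1}=(-1)^dD^{-1}$ with $D^{-1}$ integral. This is where the normalization of Theorem~\ref{thm:local-as-index-normalization} matters: it gives $D^\dagger=D^{-1}$. We also use that $\iota$ commutes with matrix inversion and transposition, i.e.\ $(M^{-1})^\dagger=(M^\dagger)^{-1}$, which is straightforward because $\iota$ is a ring automorphism of $F$.
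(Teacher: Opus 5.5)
Your proof is correct and follows the paper's argument essentially step for step. The identity $M^\dagger=(-1)^dD^{-1}M$ gives well-definedness in the first variable and kills the left radical, the right radical is immediate, and injectivity of the adjoint together with $\Hom_R(R/(t^a),F/R)\cong R/(t^a)$ gives nonsingularity.

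One small correction to your closing remark concerns the role of the normalization. The identity $M^\dagger=(-1)^dD^{-1}M$ follows from $M=(-1)^dDM^\dagger$ alone. Moreover, $D^{-1}\in GL_r(R)$, and indeed $D^\dagger=D^{-1}$, already hold for the unnormalized $D=P(1+t)^L$. So the normalization of Theorem~\ref{thm:local-as-index-normalization} is not what makes this theorem work, which is consistent with the paper's remark that the pairing is independent of the normalization.
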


\begin{proof}
  From \eqref{eq:local-cy-functional},
  \begin{equation}
    \label{eq:dagger-M}
    M^\dagger=(-1)^dD^{-1}M.
  \end{equation}
  If $v$ is replaced by $v+Ma$, the change in \eqref{eq:twisted-linking-form} is
  $(-1)^da^\dagger D^{-1}w\in R$; if $w$ is replaced by $w+Mb$, the change is
  $v^\dagger b\in R$.  Thus the pairing is well defined.

  Let $\overline T$ denote the $\iota$-twist of an $R$-module $T$:
  $r\cdot_{\overline T}\xi:=\iota(r)\xi$.  The adjoint of \eqref{eq:twisted-linking-form} is
  the $R$-linear map
  \[
    \overline{\mathcal G_A}\longrightarrow
    \Hom_R(\mathcal G_A,F/R),
    \qquad
    \xi\longmapsto\langle\xi,-\rangle_A.
  \]
  If $\langle[v],[w]\rangle_A=0$ for every $w$, then $v^\dagger M^{-1}\in R^{1\times r}$.
  Taking $\dagger$ gives $(M^\dagger)^{-1}v\in R^r$, and \eqref{eq:dagger-M} identifies this
  vector, up to the sign $(-1)^d$, with $M^{-1}Dv$.  Since $D$ commutes with $M$ and is
  invertible over $R$, it follows that $v\in MR^r$.  The same coordinate argument in the
  second variable shows that the right radical is zero.  Hence the adjoint is injective.
  Both source and target have the same finite length: for every $a\geq1$,
  $\Hom_R(R/(t^a),F/R)\cong R/(t^a)$.  The adjoint is therefore an isomorphism, proving
  nonsingularity.
\end{proof}

The pairing is independent of the normalization chosen above.  Indeed, before dropping the
superscript $\sharp$, the cokernel isomorphism $\Phi([v])=[G^{-1}v]$ satisfies
$\langle\Phi([v]),\Phi([w])\rangle_{M^\sharp} =v^\dagger M^{-1}w\pmod R$, because
$(G^{-1})^\dagger=G$ and $(M^\sharp)^{-1}=G^{-1}M^{-1}G$.

The functional equation gives the twisted symmetry
\begin{equation}
\label{eq:twisted-linking-symmetry}
  \iota\!\left(\langle\eta,\xi\rangle_A\right)
  =(-1)^d\langle\xi,\nu_A(\eta)\rangle_A.
\end{equation}
Indeed,
\[
  \iota\!\left(w^\dagger M^{-1}v\right)
  =v^\dagger(M^\dagger)^{-1}w
  =(-1)^dv^\dagger M^{-1}Dw.
\]

\subsection{Exact Smith layers and Nakayama symmetry}

For $a\geq1$ define the auxiliary, or exact Smith, layer
\begin{equation}
\label{eq:exact-smith-layer}
  V_a(A):=\Delta_a(\mathcal G_A)
  :=\frac{\mathcal G_A[t^a]}
          {\mathcal G_A[t^{a-1}]+t\mathcal G_A[t^{a+1}]}.
\end{equation}
If $\mathcal G_A\cong\bigoplus_iR/(t^{a_i})$, then
\[
m_a(A):=\dim_\K V_a(A)=\#\{i:a_i=a\}.
\]

\begin{lemma}[Nakayama action on exact layers]
  The automorphism $\nu_A$ preserves $\mathcal G_A[t^m]$ for every $m$ and therefore induces
  an automorphism $\nu_a\in\operatorname{GL}(V_a(A))$ for every $a\geq1$.
\end{lemma}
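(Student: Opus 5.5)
The plan is to use only that $\nu_A$ is an $R$-linear automorphism of $\mathcal G_A$. The exact layers are built functorially from the $R$-module structure, so any $R$-module automorphism descends to them.

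First, I would check that $\nu_A$ is well defined, $R$-linear and bijective. Well-definedness follows from the commutation $MD=DM$ in \eqref{eq:local-cy-functional}: if $v=Ma$, then $Dv=DMa=MDa\in MR^r$. Linearity over $R$ is clear because $D$ has entries in $R$. For bijectivity, note that $D=(1+t)^{\overline\ell_A}P$ lies in $GL_r(R)$, since $(1+t)^{\overline\ell_A}$ is a unit and $P$ is a permutation matrix. Also $D^{-1}$ commutes with $M$ as well, so $[v]\mapsto[D^{-1}v]$ is a well-defined two-sided inverse of $\nu_A$.

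Next, $R$-linearity gives $t^m\nu_A(\xi)=\nu_A(t^m\xi)$. Hence $\nu_A$ maps $\mathcal G_A[t^m]$ into itself, and the same holds for $\nu_A^{-1}$, so this restriction is an automorphism. In the same way, $\nu_A$ preserves $t\,\mathcal G_A[t^{a+1}]$, since $\nu_A(t\eta)=t\,\nu_A(\eta)$ and $\nu_A(\eta)\in\mathcal G_A[t^{a+1}]$. It therefore preserves both the numerator $\mathcal G_A[t^a]$ and the denominator $\mathcal G_A[t^{a-1}]+t\,\mathcal G_A[t^{a+1}]$ of \eqref{eq:exact-smith-layer}. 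Note that this denominator is contained in $\mathcal G_A[t^a]$, since $t^a\cdot t\eta=t^{a+1}\eta=0$.

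Finally, $\nu_A$ thus induces a $\K$-linear (indeed $R$-linear) endomorphism $\nu_a$ of $V_a(A)$. The map $\nu_A^{-1}$ induces its inverse by the same reasoning, so $\nu_a\in\operatorname{GL}(V_a(A))$. There is no genuine obstacle here; the only point to watch is invertibility of $\nu_A$, which relies on $D\in GL_r(R)$ and on $D^{-1}$ also commuting with $M$.
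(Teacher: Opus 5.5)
Your proof is correct and follows the paper's argument. $R$-linearity of $\nu_A$ gives preservation of each $\mathcal G_A[t^m]$ and of $t\,\mathcal G_A[t^{a+1}]$, hence an induced map on $V_a(A)$. Your extra checks (well-definedness from $MD=DM$, and invertibility from $D\in GL_r(R)$ with $D^{-1}$ also commuting with $M$) spell out what the paper takes for granted when it introduces $\nu_A$ as an $R$-linear automorphism.
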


\begin{proof}
  The map $\nu_A$ is $R$-linear, so it commutes with multiplication by $t$ and preserves
  every kernel $\ker(t^m)$.  It therefore preserves both summands in the denominator of
  \eqref{eq:exact-smith-layer} and induces the asserted automorphism.
\end{proof}

For $\xi,\eta\in\mathcal G_A[t^a]$, the linking value has pole order at most $a$.  Let
$[t^{-a}]$ denote the coefficient of $t^{-a}$ in $F/R$ and define
\[
\beta_a(\bar\xi,\bar\eta)
  :=[t^{-a}]\langle\xi,\eta\rangle_A.
\]
This is the classical DVR auxiliary-form construction; compare Levine
\cite[pp.~48--53]{Levine1980} and Orson \cite[Definition~3.6 and Theorem~3.8]{Orson2015}.

\begin{proposition}[Auxiliary form]
  The form $\beta_a$ is nondegenerate on $V_a(A)$.
\end{proposition}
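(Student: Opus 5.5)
We first check that $\beta_a$ descends to $V_a(A)$, and then reduce nondegeneracy to the nonsingularity of the linking form from Theorem~\ref{thm:twisted-cy-linking}, combined with a standard counting argument in a Smith basis.

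\emph{Well-definedness.} For $\xi,\eta\in\mathcal G_A[t^a]$, write $\langle\xi,\eta\rangle_A=v^\dagger M^{-1}w$. Since $t^a w\in MR^r$, the vector $t^aM^{-1}w$ lies in $R^r$, so the value has pole order at most $a$. Note that $\iota(t)^a=(-t)^a(1+t)^{-a}$, so the same bound holds in the first variable.

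Next we show the form vanishes on the denominator of \eqref{eq:exact-smith-layer}.
\begin{itemize}
\item If $\xi\in\mathcal G_A[t^{a-1}]$, then $\langle\xi,\eta\rangle_A$ has pole order at most $a-1$ by the same argument. Hence its $t^{-a}$ coefficient vanishes.
\item If $\xi=t\xi'$ with $\xi'\in\mathcal G_A[t^{a+1}]$, the pairing is $\iota(t)\langle\xi',\eta\rangle_A$, because $\iota$-semilinearity holds in the first slot. Here $\langle\xi',\eta\rangle_A$ has pole order at most $a$ (bounded by $\eta$), and $\iota(t)=-t+O(t^2)$ lowers pole order by one. So again the $t^{-a}$ coefficient is zero.
\end{itemize}
The right variable is handled symmetrically, now using $R$-linearity. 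Thus $\beta_a$ is a well-defined $\K$-bilinear form on $V_a(A)$.

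\emph{Nondegeneracy.} Suppose $\xi\in\mathcal G_A[t^a]$ satisfies $\beta_a(\bar\xi,\bar\eta)=0$ for all $\eta\in\mathcal G_A[t^a]$. We want to show $\xi\in\mathcal G_A[t^{a-1}]+t\mathcal G_A[t^{a+1}]$.

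Choose a decomposition $\mathcal G_A=\bigoplus_i R g_i$ with $\operatorname{ann}(g_i)=(t^{a_i})$. Then
\[
\mathcal G_A[t^a]=\bigoplus_i t^{\max(a_i-a,0)}Rg_i.
\]
Modulo the denominator, only the components with $a_i=a$ survive, and there only their $t^0$ coefficients matter. This shows $V_a(A)$ has basis the images of $\{g_i: a_i=a\}$.

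By Theorem~\ref{thm:twisted-cy-linking} the adjoint $\overline{\mathcal G_A}\to\Hom_R(\mathcal G_A,F/R)$ is an isomorphism, which gives $\mathcal G_A[t^a]\cong\Hom(\mathcal G_A/t^a,F/R)$, functorially in $a$. Suppose $\xi$ has nonzero image in $V_a(A)$. Then $\xi\notin \mathcal G_A[t^{a-1}]+t\mathcal G_A[t^{a+1}]$, so the functional $\langle\xi,-\rangle_A$ has exact pole order $a$ on some element $\eta$. We then need to adjust $\eta$ to lie in $\mathcal G_A[t^a]$ while keeping the $t^{-a}$ coefficient nonzero.

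\emph{Main obstacle.} The key step is this adjustment of $\eta$. The cleanest route is a dimension count, which avoids the adjustment altogether. Consider the map
\[
\mathcal G_A[t^a]/\mathcal G_A[t^{a-1}]\;\longrightarrow\;\Hom_\K\bigl(t^{a-1}\mathcal G_A\cap\mathcal G_A[t]\ \text{suitably},\K\bigr).
\]
Equivalently, use the isomorphism $t^{a-1}:V_a\to(\mathcal G_A[t]\cap t^{a-1}\mathcal G_A)/(\mathcal G_A[t]\cap t^{a}\mathcal G_A)$, which follows from the Smith basis above. On the socle layer $\mathcal G_A[t]$, the linking form restricted to $\beta_1$ is nondegenerate: it is the adjoint isomorphism reduced modulo $t$, since $\mathcal G_A[t]$ is dual to $\mathcal G_A/t\mathcal G_A$. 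Moreover, $\beta_a(\bar\xi,\bar\eta)=\beta_1(t^{a-1}\xi,\eta')$ for suitable $\eta'$, and this identity matches the layer filtrations on both sides.

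Because the pairing between $\mathcal G_A[t]$ and $\mathcal G_A/t\mathcal G_A$ is perfect and compatible with the filtrations by $t^{k}\mathcal G_A$ and $\mathcal G_A[t^{k}]$ (these are mutual annihilators under a nonsingular linking form), the associated graded pieces pair perfectly. The piece of index $a$ is exactly $V_a(A)\times V_a(A)\to\K$ via $\beta_a$. The annihilator identity $(t^k\mathcal G_A)^\perp=\mathcal G_A[t^k]$ follows directly from nonsingularity and $R$-(semi)linearity. From it, together with the equality $\dim V_a=m_a$ on both sides, nondegeneracy of $\beta_a$ follows.
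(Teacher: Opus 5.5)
Your route is correct in outline but differs from the paper's.

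\textbf{The paper's route.} The paper attacks the radical directly. If $\xi\in\mathcal G_A[t^a]$ is $\beta_a$-radical, then $\langle t^{a-1}\xi,\eta\rangle_A=\iota(t)^{a-1}\langle\xi,\eta\rangle_A=0$ in $F/R$ for all $\eta\in\mathcal G_A[t^a]$. The annihilator identity ${}^\perp\mathcal G_A[t^a]=t^a\mathcal G_A$ (nonsingularity plus a length count, using injectivity of $F/R$) then gives $t^{a-1}\xi=t^a\zeta$ with $\zeta\in\mathcal G_A[t^{a+1}]$. Hence $\xi=(\xi-t\zeta)+t\zeta$ lies in the denominator of $V_a(A)$. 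No Smith basis is chosen. The problem of adjusting $\eta$ never arises, because the test vectors are already in $\mathcal G_A[t^a]$ and only the annihilator of that submodule is needed.

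\textbf{Your route.} You obtain every $\beta_a$ at once as the $a$-th associated graded piece of the single perfect pairing $\mathcal G_A[t]\times\mathcal G_A/t\mathcal G_A\to\K$. The filtrations are $\mathcal G_A[t]\cap t^k\mathcal G_A$ and the image of $\mathcal G_A[t^k]$, and you transport to $V_a(A)$ by $t^{a-1}$. This gives left and right nondegeneracy simultaneously and reduces everything to finite-dimensional linear algebra. The price is a Smith basis and more bookkeeping. The nontrivial input, a ``hard'' annihilator statement, is the same in both routes.

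\textbf{What to tighten.}
\begin{enumerate}[(a)]
\item The orthogonality you need lives in the socle--top pairing. It says that the annihilator of $\mathcal G_A[t]\cap t^k\mathcal G_A=t^k\mathcal G_A[t^{k+1}]$ in $\mathcal G_A/t\mathcal G_A$ is the image of $\mathcal G_A[t^k]$.
\item The identity $(t^k\mathcal G_A)^\perp=\mathcal G_A[t^k]$, which you derive directly from nonsingularity, does not give this. Being orthogonal to $t^k\mathcal G_A[t^{k+1}]$ is weaker than being orthogonal to $t^k\mathcal G_A$.
\item What does give it is the easy inclusion $\operatorname{im}\mathcal G_A[t^k]\subseteq(t^k\mathcal G_A[t^{k+1}])^\perp$. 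Combine it with the count $\#\{i:a_i>k\}+\#\{i:1\leq a_i\leq k\}=\#\{i:a_i\geq1\}$ against the perfect socle--top pairing. It is this equality, not the easy identity, that makes the graded pairings injective.
\item The ``suitable $\eta'$'' is just $\eta$. Since $\iota(t)^{a-1}=(-1)^{a-1}t^{a-1}(1+t)^{1-a}$, the $t^{-1}$-coefficient of $\langle t^{a-1}\xi,\eta\rangle_A$ is $(-1)^{a-1}\beta_a(\bar\xi,\bar\eta)$.
\item Check that the kernel of $\mathcal G_A[t^a]\to\operatorname{im}\mathcal G_A[t^a]/\operatorname{im}\mathcal G_A[t^{a-1}]$ is exactly $\mathcal G_A[t^{a-1}]+t\mathcal G_A[t^{a+1}]$. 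This is immediate in the Smith basis, and it ensures the right-hand graded piece really is $V_a(A)$.
\item The abandoned paragraph about adjusting $\eta$ and the map labelled ``suitably'' can be deleted.
\end{enumerate}
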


\begin{proof}
  Sesquilinearity shows that the coefficient of $t^{-a}$ vanishes whenever either argument
  lies in $\mathcal G_A[t^{a-1}]$ or in $t\mathcal G_A[t^{a+1}]$, since multiplication by
  $t$, or by $\iota(t)$ in the first variable, lowers the pole order; hence $\beta_a$ is
  well defined.  Nondegeneracy follows from Theorem~\ref{thm:twisted-cy-linking} by the
  standard argument of \cite[Theorem~3.8]{Orson2015}: since $F/R$ is injective over $R$, a
  length count gives ${}^\perp\mathcal G_A[t^a]=t^a\mathcal G_A$, so a $\beta_a$-radical
  class $\xi$ satisfies $\iota(t)^{a-1}\xi\in t^a\mathcal G_A$; as $\iota(t)^{a-1}$ is a
  unit multiple of $t^{a-1}$, writing $t^{a-1}\xi=t^a\zeta$ places $\xi$ in
  $\mathcal G_A[t^{a-1}]+t\mathcal G_A[t^{a+1}]$.  The right radical is treated in the same
  way.
\end{proof}

Since $\iota(t^{-a})=(-1)^at^{-a}+$ terms of pole order $<a$, the leading coefficient of
\eqref{eq:twisted-linking-symmetry} gives
\begin{equation}
\label{eq:smith-layer-symmetry}
  \beta_a(y,x)=(-1)^{d+a}\beta_a(x,\nu_a y).
\end{equation}
The sign is the usual auxiliary-form shift produced by an involution with
$\iota(t)/t\equiv-1\pmod t$; compare \cite[Theorem~3.8]{Orson2015}.  What is specific here
is the additional operator $\nu_a$.  If $B_a$ is a Gram matrix for $\beta_a$ and $N_a$ is
the matrix of $\nu_a$, then \eqref{eq:smith-layer-symmetry} says $B_a^T=(-1)^{d+a}B_aN_a$.
Transposing this identity and substituting it back gives $B_a=N_a^TB_aN_a$.  Thus
\begin{equation}
\label{eq:smith-layer-isometry}
  \beta_a(\nu_a x,\nu_a y)=\beta_a(x,y).
\end{equation}

\begin{lemma}[Finite order on exact layers]
  \label{lem:nakayama-finite-order}
  Let $N$ be the order of the permutation matrix $P$.  Then $\nu_a^N=1$ on every $V_a(A)$.
\end{lemma}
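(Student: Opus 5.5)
After normalization, $D=(1+t)^{\overline\ell_A}P$. Since $P$ is a constant permutation matrix, it commutes with the scalar $(1+t)^{\overline\ell_A}$. Hence
\[
  D^N=(1+t)^{N\overline\ell_A}P^N=(1+t)^{N\overline\ell_A}I_r .
\]
As noted after Theorem~\ref{thm:local-as-index-normalization}, $N\overline\ell_A$ is an integer, though this is not even needed. So $\nu_A^N$ is multiplication on $\mathcal G_A$ by the unit $u=(1+t)^{N\overline\ell_A}\in R^\times$, which satisfies $u\equiv1\pmod t$. Write $u=1+t\,h$ with $h\in R$.

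It remains to check that multiplication by $1+th$ induces the identity on each exact layer $V_a(A)$. Take $\xi\in\mathcal G_A[t^a]$. Then $u\xi-\xi=t\,h\xi$. Since $h\xi$ lies in $\mathcal G_A[t^a]\subseteq\mathcal G_A[t^{a+1}]$, the difference $t h\xi$ lies in $t\mathcal G_A[t^{a+1}]$, which is part of the denominator of \eqref{eq:exact-smith-layer}. Therefore $\nu_a^N=(\nu_A^N)_a=\mathrm{id}$ on $V_a(A)$. Here we use that the induced map of a composite is the composite of the induced maps, which follows from the functoriality in the preceding lemma.

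One point needs care. The statement is made after the normalization has been adopted, so $\nu_A$ refers to the normalized $D$; the argument above therefore applies directly. If one instead worked with the unnormalized $D=P(1+t)^L$, the proof of Theorem~\ref{thm:cycle-average-decomposition} gives $D^N=(1+t)^K$ with $K$ diagonal and $K\equiv N\overline\ell_A I$ in the indecomposable case. The same argument then goes through, since $(1+t)^K\equiv I\pmod t$.

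For a finite product of algebras, the claim holds componentwise, because $N$ is a multiple of the order of $P$ on each factor. There is no real obstacle in this argument. The only place where something could go wrong is making sure the "$\equiv1\pmod t$" correction lands in the subspace $t\mathcal G_A[t^{a+1}]$ and not merely in $t\mathcal G_A$, and this is guaranteed because $h\xi$ is killed by $t^a$.
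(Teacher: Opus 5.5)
Your proof is correct and uses the same key observation as the paper: a unit congruent to $1$ modulo $t$ induces the identity on $V_a(A)$, because its deviation from the identity lands in $t\mathcal G_A[t^{a}]\subseteq t\mathcal G_A[t^{a+1}]$. The only difference is where you apply it. You apply it to $D^N=(1+t)^{N\overline\ell_A}I_r$. The paper applies it to the scalar factor of $D$ itself, which gives the slightly stronger fact that $\nu_a$ equals the operator induced by $P$, and then concludes from $P^N=I_r$.
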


\begin{proof}
  By local normalization, $D=(1+t)^{\overline\ell_A}P$.  Since the scalar factor is a
  central unit, $P=(1+t)^{-\overline\ell_A}D$ also descends to $\mathcal G_A$.  On
  $\mathcal G_A[t^a]$, multiplication by a unit congruent to $1$ modulo $t$ induces the
  identity on $V_a(A)$, because its difference from the identity has image in
  $t\mathcal G_A[t^a]\subseteq t\mathcal G_A[t^{a+1}]$.  Thus the operators induced by $D$
  and $P$ on $V_a(A)$ coincide.  Since $P^N=I_r$, the assertion follows.
\end{proof}

In characteristic zero, $z^N-1$ is separable.  After extending scalars to an algebraic
closure, $\nu_a$ is therefore semisimple with root-of-unity eigenvalues.  Write
$V_{a,\lambda}$ for the corresponding eigenspace.

\begin{corollary}[Reciprocal eigenspaces]
  \label{cor:reciprocal-eigenspaces}
  For every eigenvalue $\lambda$ of $\nu_a$, $\dim V_{a,\lambda}=\dim V_{a,\lambda^{-1}}$.
\end{corollary}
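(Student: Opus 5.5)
We work over an algebraic closure $\overline{\K}$, extending scalars in $V_a(A)$, $\beta_a$ and $\nu_a$; neither dimensions nor nondegeneracy change. By Lemma~\ref{lem:nakayama-finite-order} and separability of $z^N-1$ in characteristic zero, $\nu_a$ is diagonalizable, so
\[
V_a(A)=\bigoplus_\lambda V_{a,\lambda},
\]
with $\lambda$ ranging over $N$-th roots of unity. The strategy is to show that $\beta_a$ pairs $V_{a,\lambda}$ nondegenerately with $V_{a,\lambda^{-1}}$ and annihilates all other pairs of eigenspaces. We then read off the dimension equality.

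The first step uses the isometry property \eqref{eq:smith-layer-isometry}. Take $x\in V_{a,\lambda}$ and $y\in V_{a,\mu}$. Then
\[
\beta_a(x,y)=\beta_a(\nu_ax,\nu_ay)=\lambda\mu\,\beta_a(x,y).
\]
This uses that $\beta_a$ is bilinear over $\K$ on the layer, which we must check. The form is sesquilinear over $R$ with respect to $\iota$, but scalars in $\K$ are fixed by $\iota$. Moreover $\nu_a$ acts $\K$-linearly, and the eigen-decomposition after base change is compatible with this. Hence $\beta_a(x,y)=0$ unless $\lambda\mu=1$, that is, unless $\mu=\lambda^{-1}$.

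The second step uses nondegeneracy of $\beta_a$ (the Auxiliary form proposition). Fix a nonzero $x\in V_{a,\lambda}$. Some $y\in V_a(A)$ satisfies $\beta_a(x,y)\neq0$. Decompose $y$ into eigencomponents; by the first step only the $V_{a,\lambda^{-1}}$-component contributes. So the map
\[
V_{a,\lambda}\to V_{a,\lambda^{-1}}^{*},\qquad x\mapsto\beta_a(x,-),
\]
is injective, which gives $\dim V_{a,\lambda}\le\dim V_{a,\lambda^{-1}}$. Applying the same argument with $\lambda^{-1}$ in place of $\lambda$ gives the reverse inequality, and equality follows. Alternatively, nondegeneracy in the second variable gives injectivity of the map $V_{a,\lambda^{-1}}\to V_{a,\lambda}^{*}$ directly.

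The main point requiring care is that the extension of scalars preserves all the structure used. $V_a(A)$ is only a $\K$-vector space, on which $R$ acts through $t$, and $\beta_a$ takes values in $\K$ as the coefficient of $t^{-a}$. So $\beta_a$ is a genuine $\K$-bilinear form, and the relation $B_a=N_a^TB_aN_a$ derived above holds at the level of matrices. Base change to $\overline\K$ therefore preserves both nondegeneracy ($\det B_a\neq0$) and the isometry property. Once this is in place the argument is pure linear algebra. Equivalently, one can work entirely with matrices: since $N_a^TB_aN_a=B_a$, we have $N_a^T=B_aN_a^{-1}B_a^{-1}$. Hence $N_a$ is similar to $N_a^{-T}$, whose eigenvalue multiplicities are those of $N_a$ inverted, and this gives the corollary immediately.
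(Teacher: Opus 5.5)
Your proof is correct and is essentially the paper's argument: the isometry \eqref{eq:smith-layer-isometry} forces $\beta_a(V_{a,\lambda},V_{a,\mu})=0$ unless $\lambda\mu=1$, and nondegeneracy of $\beta_a$ then makes the pairing between $V_{a,\lambda}$ and $V_{a,\lambda^{-1}}$ perfect. Your alternative matrix argument, that $N_a^TB_aN_a=B_a$ makes $N_a$ similar to $N_a^{-T}$, is also valid and does not even need diagonalizability.
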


\begin{proof}
  Equation~\eqref{eq:smith-layer-isometry} shows that $V_{a,\lambda}$ pairs only with
  $V_{a,\lambda^{-1}}$; nondegeneracy makes the pairing perfect between them.
\end{proof}

\begin{corollary}[Self-reciprocal parity]
  If $\lambda\in\{1,-1\}$ and $(-1)^{d+a}\lambda=-1$, then $\dim V_{a,\lambda}$ is even.
\end{corollary}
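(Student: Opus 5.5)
The plan is to restrict $\beta_a$ to the eigenspace $W=V_{a,\lambda}$ and show that it is a nondegenerate skew-symmetric (alternating) form there. An alternating nondegenerate form forces $\dim W$ to be even.

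First, nondegeneracy on $W$. Since $\lambda=\lambda^{-1}$ when $\lambda=\pm1$, the proof of Corollary~\ref{cor:reciprocal-eigenspaces} applies directly. By \eqref{eq:smith-layer-isometry}, for $x\in V_{a,\mu}$ and $y\in V_{a,\mu'}$ we have $\beta_a(x,y)=\mu\mu'\beta_a(x,y)$, so $\beta_a(x,y)=0$ unless $\mu\mu'=1$. Hence $V_{a,\lambda}$ is $\beta_a$-orthogonal to every $V_{a,\mu}$ with $\mu\neq\lambda^{-1}=\lambda$. Since $\nu_a$ is semisimple and $V_a(A)$ is the direct sum of its eigenspaces (after extending scalars to $\overline{\K}$, which preserves both dimensions and nondegeneracy), nondegeneracy of $\beta_a$ on $V_a(A)$ forces the restriction $\beta_a|_{W\times W}$ to be nondegenerate.

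Second, skew-symmetry. For $x,y\in W$, \eqref{eq:smith-layer-symmetry} gives
\[
\beta_a(y,x)=(-1)^{d+a}\beta_a(x,\nu_a y)=(-1)^{d+a}\lambda\,\beta_a(x,y)=-\beta_a(x,y).
\]
Since $\operatorname{char}\K=0$, setting $x=y$ gives $\beta_a(x,x)=0$, so the restricted form is alternating. This step also has to confirm that $\beta_a$ is genuinely bilinear over $\K$ on $V_a(A)$. The sesquilinearity through $\iota$ only affects the $t$-adic module structure, and $\iota$ fixes $\K$, so $\K$-bilinearity holds.

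Finally, a nondegenerate alternating bilinear form exists only in even dimension: its Gram matrix $B$ satisfies $B^T=-B$, so $\det B=(-1)^{\dim W}\det B$ with $\det B\neq0$, which gives $\dim W$ even. Because the eigenspace dimension is unchanged by scalar extension, the conclusion holds over $\K$ as well.

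The main subtlety is the first step, specifically the decomposition into eigenspaces and the orthogonality argument. The remaining steps are immediate from \eqref{eq:smith-layer-symmetry}.
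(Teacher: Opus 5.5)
Your proposal is correct and is the paper's argument: on the self-reciprocal eigenspace $V_{a,\lambda}$, the restriction of $\beta_a$ is nondegenerate by the orthogonality behind Corollary~\ref{cor:reciprocal-eigenspaces}, and it is alternating by \eqref{eq:smith-layer-symmetry}, which forces $\dim V_{a,\lambda}$ to be even. You spell out the orthogonality, $\K$-bilinearity and determinant steps that the paper compresses into one line.
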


\begin{proof}
  On the self-reciprocal eigenspace, \eqref{eq:smith-layer-symmetry} makes the nondegenerate
  restriction of $\beta_a$ alternating.
\end{proof}

Taking determinants in the Gram-matrix identity above gives
\begin{equation}
  \label{eq:smith-layer-determinant}
  \det(\nu_a)=(-1)^{(d+a)m_a(A)}.
\end{equation}

There is also a determinant-level parity statement.  Put
$\mathsf L_A=\length_R\mathcal G_A$.  Taking determinants in \eqref{eq:rr-cy-functional}
gives
\[
  \varepsilon_A(x)
  =(-1)^{dr}\det(P)x^{\operatorname{tr}L}\varepsilon_A(x^{-1}).
\]
Write $\varepsilon_A(1+t)=ct^{\mathsf L_A}+O(t^{\mathsf L_A+1})$ with $c\neq0$.  Since
$(1+t)^{-1}-1=-t+O(t^2)$ and $(1+t)^{\operatorname{tr}L}$ is a unit with constant term one,
comparison of leading terms gives
\begin{equation}
\label{eq:total-length-parity}
  (-1)^{\mathsf L_A}=(-1)^{dr}\det(P).
\end{equation}

After extending scalars to an algebraic closure, put $m_{a,-}(A)=\dim V_{a,-1}$, let
$m_0(A)=\#\{i:a_i=0\}$, and write $\det(P)=(-1)^{\epsilon(P)}$ with $\epsilon(P)\in\{0,1\}$.

\begin{theorem}[Equivariant parity of the full local Smith profile]
  \label{thm:equivariant-full-profile-parity}
  For every $a>0$,
  \begin{equation}
    \label{eq:minus-eigenspace-congruence}
    m_{a,-}(A)\equiv(d+a)m_a(A)\pmod2,
  \end{equation}
  and
  \begin{equation}
    \label{eq:full-profile-congruence}
    d\,m_0(A)+\sum_{a>0}m_{a,-}(A)
    \equiv\epsilon(P)\pmod2.
  \end{equation}
\end{theorem}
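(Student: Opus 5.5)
The plan is to derive \eqref{eq:minus-eigenspace-congruence} from the determinant identity \eqref{eq:smith-layer-determinant}, and then \eqref{eq:full-profile-congruence} by combining it with the total-length parity \eqref{eq:total-length-parity}.

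For the first congruence, fix $a>0$ and extend scalars to an algebraic closure. By Lemma~\ref{lem:nakayama-finite-order}, $\nu_a$ is semisimple with eigenvalues that are $N$-th roots of unity. Then
\[
\det(\nu_a)=(-1)^{m_{a,-}(A)}\prod_{\lambda\neq\pm1}\lambda^{\dim V_{a,\lambda}}.
\]
By Corollary~\ref{cor:reciprocal-eigenspaces}, the eigenvalues $\lambda\neq\pm1$ come in reciprocal pairs $\{\lambda,\lambda^{-1}\}$ with equal multiplicities. Each pair therefore contributes $\lambda^{k}\lambda^{-k}=1$, and the product over $\lambda\neq\pm1$ is $1$. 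The eigenvalue $1$ contributes nothing. Hence $\det(\nu_a)=(-1)^{m_{a,-}(A)}$. Comparing with \eqref{eq:smith-layer-determinant} gives $(-1)^{m_{a,-}(A)}=(-1)^{(d+a)m_a(A)}$, which is \eqref{eq:minus-eigenspace-congruence}.

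This step depends on \eqref{eq:smith-layer-determinant}, which I would recheck first. Taking determinants in $B_a^T=(-1)^{d+a}B_aN_a$ gives $\det B_a=(-1)^{(d+a)m_a}\det B_a\det N_a$. Since $B_a$ is nondegenerate, $\det B_a\neq0$ and can be cancelled.

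For the second congruence, sum \eqref{eq:minus-eigenspace-congruence} over $a>0$:
\[
\sum_{a>0}m_{a,-}(A)\equiv\sum_{a>0}(d+a)m_a(A)=d\sum_{a>0}m_a(A)+\sum_{a>0}a\,m_a(A)\pmod 2.
\]
Two identities finish the computation. First, $\sum_{a>0}a\,m_a(A)=\sum_ia_i=\mathsf L_A$. Second, $\sum_{a>0}m_a(A)=r-m_0(A)$. Substituting these gives
\[
\sum_{a>0}m_{a,-}(A)\equiv dr-d\,m_0(A)+\mathsf L_A\pmod2.
\]
Now \eqref{eq:total-length-parity} gives $\mathsf L_A\equiv dr+\epsilon(P)\pmod 2$. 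Substituting, the two copies of $dr$ cancel mod $2$, and $-d\,m_0(A)\equiv d\,m_0(A)$. This is exactly \eqref{eq:full-profile-congruence}.

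The main point needing care is that the ring-indecomposability assumption behind the normalization is compatible with \eqref{eq:total-length-parity}. The latter was derived from the unnormalized identity \eqref{eq:rr-cy-functional}, so it holds for any $A$. For a product of ring factors, both sides of each congruence are additive over factors. The multiplicities $m_a$, $m_{a,-}$ and $m_0$ add, and $\det P$ is multiplicative, so $\epsilon(P)$ adds mod $2$. The statement therefore follows componentwise from the indecomposable case.
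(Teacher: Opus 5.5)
Your proposal is correct and follows the paper's proof essentially line for line. The paper likewise obtains the first congruence from $\det(\nu_a)=(-1)^{(d+a)m_a(A)}$ and the reciprocal cancellation of eigenvalues $\lambda\neq\pm1$. It then obtains the second by summing over $a>0$ and using $\sum_{a>0}m_a(A)=r-m_0(A)$, $\mathsf L_A=\sum_{a>0}a\,m_a(A)$, and the total-length parity. Your re-derivation of the determinant identity from the Gram relation, and your componentwise extension to ring factors, match the paper's surrounding remarks.
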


\begin{proof}
  Equation~\eqref{eq:smith-layer-determinant} gives $\det(\nu_a)=(-1)^{(d+a)m_a(A)}$.  By
  Corollary~\ref{cor:reciprocal-eigenspaces}, eigenvalues $\lambda\notin\{1,-1\}$ occur in
  reciprocal pairs with equal multiplicity, so their contributions to the determinant
  cancel.  Thus $\det(\nu_a)=(-1)^{m_{a,-}(A)}$, proving
  \eqref{eq:minus-eigenspace-congruence}.

Now sum \eqref{eq:minus-eigenspace-congruence} over $a>0$.  Since
\[
  \sum_{a>0}m_a(A)=r-m_0(A),
  \qquad
  \mathsf L_A=\sum_{a>0}a\,m_a(A),
\]
we obtain
\[
  \sum_{a>0}m_{a,-}(A)
  \equiv d(r-m_0(A))+\mathsf L_A\pmod2.
\]
Equation~\eqref{eq:total-length-parity} says $\mathsf L_A\equiv dr+\epsilon(P)\pmod2$, and
substitution gives \eqref{eq:full-profile-congruence}.
\end{proof}

The congruences in Theorem~\ref{thm:equivariant-full-profile-parity} are additive over ring
factors.  Indeed, the homological denominator, exact layers, Nakayama operators, and the
quantities $m_a$, $m_{a,-}$, $m_0$, and $\epsilon(P)$ decompose componentwise.  Thus the
theorem applies to a general elementary twisted Calabi--Yau algebra by the product
decomposition of Theorem~\ref{thm:cycle-average-decomposition}.

\begin{corollary}[Odd-order Nakayama profile parity]
  \label{cor:odd-order-profile-parity}
  Assume that $P$ has odd order.  Then $m_a(A)$ is even for every $a\geq0$ with $d+a$ odd.
\end{corollary}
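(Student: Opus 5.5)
Since $P$ has odd order $N$, Lemma~\ref{lem:nakayama-finite-order} gives $\nu_a^N=1$ on every $V_a(A)$. Hence every eigenvalue of $\nu_a$ (over an algebraic closure) is an $N$-th root of unity. Because $N$ is odd, $-1$ is never an eigenvalue, so $m_{a,-}(A)=0$ for all $a>0$. The cases $a>0$ and $a=0$ are then handled separately.

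For $a>0$ with $d+a$ odd, I would apply \eqref{eq:minus-eigenspace-congruence} of Theorem~\ref{thm:equivariant-full-profile-parity}. It gives $0=m_{a,-}(A)\equiv(d+a)m_a(A)\equiv m_a(A)\pmod 2$. So $m_a(A)$ is even. Equivalently, via the self-reciprocal parity corollary: the $\lambda=1$ eigenspace satisfies $(-1)^{d+a}\cdot1=-1$, so it has even dimension, and the remaining eigenvalues pair as $\lambda\neq\lambda^{-1}$ by Corollary~\ref{cor:reciprocal-eigenspaces}, which also gives even total dimension.

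For $a=0$ we have $d+a=d$ odd. I would use \eqref{eq:full-profile-congruence}, which with all $m_{a,-}=0$ reads $d\,m_0(A)\equiv\epsilon(P)\pmod 2$. A permutation of odd order is a product of cycles of odd length, each of which is even, so $\det P=1$ and $\epsilon(P)=0$. Since $d$ is odd, this yields $m_0(A)\equiv0\pmod2$.

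Theorem~\ref{thm:equivariant-full-profile-parity} was proved for ring-indecomposable $A$, and the remark following it extends it additively to products via Theorem~\ref{thm:cycle-average-decomposition}. The hypothesis also passes to factors, since the restriction of $P$ to each factor still has odd order (the order divides $N$). So the argument can be run factor by factor, or directly on the additive congruences.

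The only real obstacle is the $a=0$ case. There $V_0$ is not defined, so one cannot argue through eigenspaces, and the conclusion has to come from the global length parity \eqref{eq:total-length-parity} encoded in \eqref{eq:full-profile-congruence}. The key input there is that odd-order permutations are even.
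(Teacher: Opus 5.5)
Your proposal is correct and follows the paper's proof essentially verbatim. Lemma~\ref{lem:nakayama-finite-order} with $N$ odd rules out the eigenvalue $-1$, so $m_{a,-}(A)=0$ and \eqref{eq:minus-eigenspace-congruence} settles $a>0$; then $\epsilon(P)=0$ for an odd-order permutation, and \eqref{eq:full-profile-congruence} settles $a=0$, which is where the paper also places the only non-layer input.
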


\begin{proof}
  Let $N$ be the order of $P$.  By Lemma~\ref{lem:nakayama-finite-order}, $\nu_a^N=1$ on
  every positive exact layer.  Since $N$ is odd, $-1$ is not an eigenvalue, so
  $m_{a,-}(A)=0$ for $a>0$.  Equation \eqref{eq:minus-eigenspace-congruence} gives the
  assertion for positive $a$.  An odd-order permutation is even, so $\det P=1$ and
  $\epsilon(P)=0$.  Equation~\eqref{eq:full-profile-congruence} therefore gives
  $d\,m_0(A)\equiv0\pmod2$, which is exactly the assertion at $a=0$.
\end{proof}

\begin{remark}[Comparison with palindromic Smith theory]
  If $P=I_r$ and $L=\ell I_r$, then $q_A(x)=(-1)^d x^\ell q_A(x^{-1})^T$ is an ordinary
  $T$-palindromic matrix polynomial of type $(-1)^d$.  Mackey--Mackey--Mehl--Mehrmann
  \cite[Theorem~7.6]{MackeyMackeyMehlMehrmann2011} show that at $x=1$ the odd partial
  multiplicities pair when the type is $+1$, while the even partial multiplicities,
  including zero, pair when the type is $-1$.  Thus
  Corollary~\ref{cor:odd-order-profile-parity} recovers their parity restriction in this
  scalar untwisted specialization.  The Nakayama-equivariant refinement is the additional
  content here; Theorem~\ref{thm:local-as-index-normalization} explains why the non-scalar
  part of the AS-index is locally removable before the exact layers are formed.
\end{remark}

At the maximal layer, if $\GKdim A<\infty$ and $g=\GKdim A=a_r>0$ by
Lemma~\ref{lem:cy-smith-growth}, Corollary~\ref{cor:odd-order-profile-parity} says that
$m_g(A)$ is even whenever $P$ has odd order and $d+g$ is odd.  In rank one $m_g(A)=1$, so
$g\equiv d\pmod2$; in the degree-one Artin--Schelter regular setting
\cite{ArtinSchelter1987} this agrees with Kabbaj \cite[Theorem~2.4]{Kabbaj2026}.  For
nontrivial $P$, Theorem~\ref{thm:equivariant-full-profile-parity} retains the finer Nakayama
eigenspace decomposition on this layer.

\section{Examples and applications}
\label{sec:examples}

We begin with a family in which the complete Nakayama-equivariant Smith data can be computed
explicitly, then turn to the small-rank classification applications.  The final subsection
gives an explicit realization of a four-vertex type that survives the local obstructions.

\subsection{Cyclic skew-group algebras in arbitrary dimension and rank}

Assume throughout this subsection that $\K$ is algebraically closed of characteristic zero
and contains a primitive $n$th root of unity $\zeta$.  Let
\[
  S=\K[X_1,\ldots,X_d],
  \qquad
  G=C_n=\langle g\rangle,
\]
and let $G$ act diagonally by
\[
g(X_i)=\zeta^{w_i}X_i,
  \qquad w_i\in\mathbb Z/n.
\]
Put $A=S\#\K G$ and
$s=w_1+\cdots+w_d\in\mathbb Z/n$.
For $a\in\mathbb Z/n$, let $P_a$ be the permutation matrix acting on
column coordinates by $e_j\mapsto e_{j+a}$.

The smash-product theorem of Reyes--Rogalski--Zhang
\cite[Theorem~4.1(b)--(c)]{ReyesRogalskiZhang2014} shows that $A$ is an elementary twisted
Calabi--Yau algebra of dimension $d$ with $n$ vertices.  For $j\in\mathbb Z/n$ put
\[
\alpha_j:=\#\{i:jw_i=0\text{ in }\mathbb Z/n\},
\]
and set
$v_j=\sum_{k\in\mathbb Z/n}\zeta^{-jk}e_k$.

\begin{proposition}[Equivariant Smith profile of a cyclic skew-group algebra]
  \label{prop:cyclic-skew-profile}
  The Nakayama and AS-index matrices are $P_s$ and $dI_n$, and
  \[
    q_A(x)=\prod_{i=1}^d(I_n-xP_{w_i}).
  \]
  The local Smith exponents are $\{\alpha_j:j\in\mathbb Z/n\}$, and
  \[
    V_a(A)=\bigoplus_{\alpha_j=a}\K\bar v_j,
    \qquad
    \nu_a(\bar v_j)=\zeta^{js}\bar v_j.
  \]
\end{proposition}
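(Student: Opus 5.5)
We compute everything directly from the Koszul resolution and then diagonalize with the characters $v_j$.

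\emph{Step 1: the denominator.} The vertices of $A=S\#\K G$ are the primitive idempotents $e_k=n^{-1}\sum_h\zeta^{-kh}h$ of $\K G$, indexed by $\mathbb Z/n$. Tensor the Koszul resolution of $\K$ over $S$ (which is $G$-equivariant) with $\K G$. This gives a graded projective resolution of the simple $A$-modules. The $p$th term is $\Lambda^p(V)\otimes A\langle -p\rangle$, where $V=\bigoplus\K X_i$. Each $X_i$ shifts the idempotent index by $w_i$, with a sign convention we fix once so that it matches $P_{w_i}$. Reading off the matrix Hilbert series, $C_A(x)^{-1}=\sum_p(-x)^p\,\Lambda^p$. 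Here $\Lambda^p$ is the matrix recording, for each $p$-subset of the variables, the permutation $P_{\sum w_i}$. Because the $P_w$ commute, this sum factors as $q_A(x)=\prod_i(I_n-xP_{w_i})$.

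\emph{Step 2: Nakayama data.} The Nakayama permutation and AS-index can be read from the top term $\Lambda^d V\otimes A\langle-d\rangle$. It sits in degree $d$ at every vertex, so $L=dI_n$. It shifts the vertex index by $s$, so the permutation is $P_s$. Alternatively, we can cite \cite[Theorem~4.1]{ReyesRogalskiZhang2014}, where the Nakayama automorphism is the twist by the homological determinant, a character of $G$ whose effect on idempotents is translation by $s$. As a consistency check, this must satisfy \eqref{eq:rr-cy-functional}, and it does, since $x^{d}P_s\prod(I-x^{-1}P_{w_i})^T=(-1)^d\prod(I-xP_{w_i})$.

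\emph{Step 3: Fourier diagonalization.} The $v_j$ are common eigenvectors of all $P_a$, with $P_a v_j=\zeta^{ja}v_j$ up to the sign convention. They form a basis over $\K$ because the Vandermonde matrix is invertible. Its inverse is $n^{-1}$ times a root-of-unity matrix, so the change of basis lies in $GL_n(\K)\subset GL_n(R)$. In this basis,
\[
M_A=\operatorname{diag}_j\prod_i\bigl(1-(1+t)\zeta^{jw_i}\bigr).
\]
A factor with $\zeta^{jw_i}\neq1$ is a unit in $R$, while a factor with $jw_i=0$ equals $-t$. So the $j$th diagonal entry is a unit times $t^{\alpha_j}$, and the Smith exponents are $\{\alpha_j\}$. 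Hence $\mathcal G_A=\bigoplus_j R/(t^{\alpha_j})\,[v_j]$, and $V_a(A)$ is spanned by the $\bar v_j$ with $\alpha_j=a$.

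\emph{Step 4: the Nakayama action.} By Lemma~\ref{lem:nakayama-finite-order}, $\nu_a$ is induced by $P=P_s$ itself. The normalization $G$ is trivial here because $L$ is scalar. So $\nu_a(\bar v_j)=\zeta^{js}\bar v_j$.

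\emph{Main obstacle.} The delicate part is the bookkeeping of conventions. The $\mu$ versus $\mu^{-1}$ question in $P_{ij}=\delta_{\mu(i),j}$, and left versus right modules, decide whether one gets $P_s$ or $P_{-s}$, and whether $v_j$ has eigenvalue $\zeta^{js}$ or $\zeta^{-js}$. I would fix them by checking the case $n=2$, $d=1$, $w=1$ directly. The eigenspace dimensions are insensitive to this choice, since they are symmetric under $\lambda\mapsto\lambda^{-1}$ by Corollary~\ref{cor:reciprocal-eigenspaces}.
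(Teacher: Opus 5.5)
Your proposal is correct and follows the paper's route. The $G$-equivariant Koszul resolution gives $q_A=\prod_i(I_n-xP_{w_i})$ and $L=dI_n$, the homological determinant (or the top Koszul term) gives $P_s$, and the constant Fourier basis diagonalizes $M_A$, with $(1+t)^d$ acting trivially on the exact layers. One improvement: your ``consistency check'' is actually decisive, since $q_A$ is invertible and \eqref{eq:rr-cy-functional} then forces $Px^L=(-1)^dq_A(x)\bigl(q_A(x^{-1})^T\bigr)^{-1}=x^dP_s$, which settles the $P_s$ versus $P_{-s}$ convention without checking a small case by hand.
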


\begin{proof}
  The Nakayama automorphism is determined by the homological determinant in
  \cite[Theorem~4.1(b)--(c)]{ReyesRogalskiZhang2014}.  Since
  $\operatorname{Ext}^d_S(\K,\K)\cong\bigwedge^d(S_1^*)$, the action of $g$ on this
  one-dimensional space is multiplication by $\zeta^{-s}$.  With the antipode convention in
  \cite[Definition~3.7 and Remark~3.8]{ReyesRogalskiZhang2014}, this gives
  $\operatorname{hdet}(g)=\zeta^s$.  For the character idempotents
\[
  e_j=\frac1n\sum_{m=0}^{n-1}\zeta^{-jm}g^m,
\]
the winding part sends $e_j$ to $e_{j-s}$; in our column convention the Nakayama permutation
matrix is therefore $P_s$.

The Koszul resolution of $\K$ over $S$ is $G$-equivariant.  After decomposition by the
character idempotents, homological degree $p$ contributes
\[
  \sum_{\substack{I\subseteq\{1,\ldots,d\}\\|I|=p}}
  P_{\sum_{i\in I}w_i}
\]
with internal shift $p$.  Taking the alternating Euler sum gives
\[
  \sum_{p=0}^d(-x)^p
  \sum_{|I|=p}P_{\sum_{i\in I}w_i}
  =\prod_{i=1}^d(I_n-xP_{w_i}),
\]
and the top Koszul term gives the AS-index matrix $dI_n$.

The vectors $v_j$ form a Fourier basis with $P_av_j=\zeta^{ja}v_j$.  Hence $q_A(x)$ acts on
the $j$th Fourier line by $\prod_i(1-\zeta^{jw_i}x)$, whose order of vanishing at $x=1$ is
$\alpha_j$.  Since the Fourier change of basis is constant and invertible, these orders are
the Smith exponents.  The corresponding local cokernel summand is $R/(t^{\alpha_j})$ up to a
unit, so $V_a(A)$ consists precisely of the lines with $\alpha_j=a$.  Finally,
$D=P_s(1+t)^d$, and $(1+t)^d$ induces the identity on every exact layer.  Thus
$\nu_a(\bar v_j)=\zeta^{js}\bar v_j$.
\end{proof}

Changing the antipode convention replaces $s$ by $-s$ and hence inverts every exact-layer
eigenvalue.  Since $\alpha_j=\alpha_{-j}$, the layer spectra, and therefore all parity data,
are unchanged.

As graded vector spaces $A\cong S\otimes_\K\K G$, so $\GKdim A=d$.  Correspondingly
$\alpha_0=d$, and Lemma~\ref{lem:cy-smith-growth} identifies this with the largest local
exponent.  Proposition~\ref{prop:cyclic-skew-profile} records substantially more than this
extremal value.

\begin{example}[An odd-order Nakayama action]
  Take $d=3$, $n=6$, and $(w_1,w_2,w_3)=(0,1,3)$.  Then $s=4$, so $P=P_4$ has order three.
  The Fourier exponents are $(3,1,2,1,2,1)$, giving the ordered Smith profile
  $(1,1,1,2,2,3)$.  If $\omega=\zeta^2$, then
  \[
    \operatorname{Spec}(\nu_1)=\{1,\omega,\omega^{-1}\},\qquad
    \operatorname{Spec}(\nu_2)=\{\omega,\omega^{-1}\},\qquad
    \operatorname{Spec}(\nu_3)=\{1\}.
  \]
  In particular, the exponent $2$, whose parity is opposite to $d=3$, occurs twice, as
  required by Corollary~\ref{cor:odd-order-profile-parity}.
\end{example}

\begin{example}[An even-order Nakayama action]
  Take $d=4$, $n=6$, and $(w_1,w_2,w_3,w_4)=(0,1,3,3)$.  Here $s=1$, so $P$ is a $6$-cycle.
  The Fourier exponents are $(4,1,3,1,3,1)$ and the ordered profile is $(1,1,1,3,3,4)$.  The
  exact-layer spectra are
  \[
    \operatorname{Spec}(\nu_1)=\{\zeta,-1,\zeta^{-1}\},\qquad
    \operatorname{Spec}(\nu_3)=\{\zeta^2,\zeta^{-2}\},\qquad
    \operatorname{Spec}(\nu_4)=\{1\}.
  \]
  Thus $m_{1,-}=1$, while $m_{3,-}=m_{4,-}=0$.  The congruence
  \eqref{eq:minus-eigenspace-congruence} therefore uses information that is not contained in
  the multiplicities $m_a$ alone.
\end{example}

\subsection{Three-vertex profile filters}
\label{ex:profile-obstruction}

Suppose that $d=3$, $r=3$, $A$ is ring-indecomposable, the Nakayama permutation has odd
order, and $\GKdim A=3$.  Lemma~\ref{lem:cy-smith-growth} gives $a_3=3$.  Since $P$ is
either the identity or a $3$-cycle, Corollary~\ref{cor:odd-order-profile-parity} requires
every even exponent, including zero, to occur with even multiplicity.  The possible profiles
are therefore
\[
(0,0,3),\quad
  (1,1,3),\quad
  (1,3,3),\quad
  (2,2,3),\quad
  (3,3,3).
\]

This restriction is strictly finer than maximal pole order and determinant parity.  For
example, the hypothetical profile $(0,2,3)$ has maximal exponent three and, for a $3$-cycle,
\[
  (-1)^5=(-1)^{3\cdot3}\det(P),
\]
so \eqref{eq:total-length-parity} holds; nevertheless zero and two each occur with odd
multiplicity, contradicting Corollary~\ref{cor:odd-order-profile-parity}.

Several surviving profiles occur in the cyclic skew-group family of
Proposition~\ref{prop:cyclic-skew-profile}.  For $n=3$, the weights $(1,1,2)$, $(0,1,1)$,
and $(0,0,1)$ give respectively $(0,0,3)$, $(1,1,3)$, and $(2,2,3)$.  Both the identity and
a $3$-cycle can occur as Nakayama permutation for the first two profiles.  The profile
$(1,3,3)$ does not occur in this family, since the two nontrivial Fourier exponents
coincide.  The constant profile $(3,3,3)$ occurs for the trivial action, but then the
algebra is decomposable.  The latter phenomenon is intrinsic in the degree-one setting.

\begin{proposition}[Degree-one rigidity of the minimal exponent]
  \label{prop:minimal-exponent-bound}
  Let $A$ be a ring-indecomposable elementary twisted Calabi--Yau algebra of dimension $3$
  on $r\geq2$ vertices which is generated in degree one.  Then its smallest local Smith
  exponent satisfies $a_1\leq2$.
\end{proposition}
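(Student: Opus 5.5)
Since $a_1=\delta_1(q_A)$ by \eqref{eq:cy-determinantal-orders}, $a_1$ is the least $t$-adic valuation of an entry of $M_A=q_A(1+t)$. So it suffices to argue by contradiction: assume $a_1\geq3$, so that
\[
q_A(1)=q_A'(1)=q_A''(1)=0
\]
as matrices, and deduce that $A$ is ring-decomposable.

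The input is the explicit shape of the homological denominator in dimension $3$ under degree-one generation. Because $A$ is ring-indecomposable and generated in degree one, Reyes--Rogalski show that the AS-index is scalar, $L=\ell I_r$. The minimal resolution of the simples then has length $3$, so $\ell\in\{3,4\}$, corresponding to quadratic and cubic relations respectively. The Calabi--Yau functional equation \eqref{eq:rr-cy-functional} pins down the remaining terms. Writing $M$ for the degree-one adjacency matrix, with $M_{ij}=\dim e_iA_1e_j$ up to the transpose convention, I expect to obtain
\[
q_A(x)=I-Mx+PM^Tx^{\ell-1}-Px^{\ell},
\]
the form used by Gaddis--Rogalski. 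Deriving or citing this shape is the step I expect to be the main obstacle. One has to justify that the generators sit in degree $1$, that the relations sit in degree $\ell-1$, and that the top term is $Px^\ell$. The duality in \eqref{eq:rr-cy-functional} then forces the middle term to be $PM^Tx^{\ell-1}$. I would cite \cite{ReyesRogalski2019} and \cite{GaddisRogalski2021} for this, checking the transpose conventions against $P_{ij}=\delta_{\mu(i),j}$.

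With this form the vanishing conditions are a direct computation.
\begin{itemize}
\item If $\ell=3$, then $q_A''(1)=2PM^T-6P$, so the second-derivative condition gives $M^T=3I$. The condition $q_A(1)=0$ then reads $I-3I+3P-P=2(P-I)=0$, so $P=I$.
\item If $\ell=4$, then $q_A''(1)=6PM^T-12P$, which gives $M=2I$.
\end{itemize}
In either case $M$ is diagonal, so $e_iA_1e_j=0$ for $i\neq j$.

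Since $A$ is generated in degree one, every element of $e_iAe_j$ is a sum of paths of degree-one arrows starting at $i$ and ending at $j$. If there are no arrows between distinct vertices, then $e_iAe_j=0$ for all $i\neq j$. Hence each $e_i$ is central and $A\cong\prod_ie_iAe_i$, which is decomposable because $r\geq2$. This contradicts ring-indecomposability, so $a_1\leq2$.
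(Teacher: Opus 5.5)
Your outline follows the paper's proof: scalar AS-index from Reyes--Rogalski, the dimension-$3$ denominator $q_A(x)=I-Mx+PM^Tx^{\ell-1}-Px^{\ell}$, then $a_1=\delta_1(q_A)\geq3$ forces $q_A''(1)=0$, hence $M$ is diagonal, hence $e_iAe_j=0$ for $i\neq j$ by degree-one generation. There is one genuine defect: the step ``the minimal resolution has length $3$, so $\ell\in\{3,4\}$'' is false. Length $3$ only gives the shape of the resolution: generators in degree $1$, relations in degree $\ell-1$, top term in degree $\ell$. It puts no bound on $\ell$. The dichotomy ``three quadratic or two cubic relations'' is the Artin--Schelter classification for connected algebras of polynomial growth. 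For quivers, $s\in\{3,4\}$ is Gaddis--Rogalski's Hypothesis~1.4, which they impose rather than prove, even in GK dimension $3$. The proposition here has no growth hypothesis at all. As written, your argument therefore does not cover $\ell\geq5$.

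The repair is easy, because the computation is uniform in $\ell$:
\[
  q_A''(1)=(\ell-1)P\bigl((\ell-2)M^T-\ell I_r\bigr).
\]
Since $\ell\geq3$ (the relations lie in degree $\ell-1\geq2$) and $P$ is invertible, $q_A''(1)=0$ gives $M^T=\tfrac{\ell}{\ell-2}I_r$, which is diagonal. This is exactly how the paper argues, with $s\geq3$ arbitrary. It is why the paper can remark that no bound on the superpotential degree is needed. Integrality of $M$ then forces $\ell\in\{3,4\}$, but only as a consequence of $a_1\geq3$, not as an input. Your extra deduction $P=I$ when $\ell=3$ is correct but unnecessary, and the remaining steps are fine.
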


\begin{proof}
  By \cite[Proposition~5.2(5)]{ReyesRogalski2019}, the AS-index is scalar; write it as
  $sI_r$, and let $M_{ij}=\dim_\K e_iA_1e_j$.  The dimension $3$ denominator formula
  \cite[Proposition~8.2]{ReyesRogalski2019} becomes
  \begin{equation}
    \label{eq:degree-one-denominator-rigidity}
    q_A(x)
    =I_r-Mx+PM^Tx^{s-1}-Px^s ,
  \end{equation}
  with $s\geq3$.
  
  If $a_1\geq3$, then every entry of $q_A$ vanishes to order at least three at $x=1$, so
  $q_A''(1)=0$.  Differentiating \eqref{eq:degree-one-denominator-rigidity} twice gives
  \[
    (s-2)M^T=sI_r.
  \]
  Hence $M$ is diagonal.  Since $A$ is generated by $A_0$ and $A_1$, this forces $e_iAe_j=0$
  for $i\neq j$, contradicting ring-indecomposability.
\end{proof}

Thus $(3,\ldots,3)$ cannot occur for a ring-indecomposable degree-one algebra on more than
one vertex, without any growth hypothesis or bound on the superpotential degree.  The
restriction $r\geq2$ is necessary: $\K[X_1,X_2,X_3]$ has profile $(3)$.

We next compare the parity sieve with the three-vertex classification of Gaddis--Rogalski
\cite[Theorems~1.6 and~1.8]{GaddisRogalski2021}, denoting their superpotential-degree
parameter by $s$ and reserving $d=3$ for the Calabi--Yau dimension.  Under
\cite[Hypothesis~1.4]{GaddisRogalski2021}, one assumes GK dimension $3$ and superpotential
degree $s\in\{3,4\}$; by \cite[Theorem~1.2]{GaddisRogalski2021}, connected degree-one
twisted Calabi--Yau algebras of dimension $3$ arise from the corresponding twisted
superpotentials.  For odd-order Nakayama permutation, the relevant finite types are the
eight $3$-cycle cases of \cite[Proposition~5.1 and Theorem~1.6]{GaddisRogalski2021} and the
nine finite $P=I_3$ cases of \cite[Proposition~6.12 and
Theorem~1.8]{GaddisRogalski2021}.  Substitution into
\eqref{eq:degree-one-denominator-rigidity} and local Smith reduction at $x=1$ gives only
\begin{equation}
  \label{eq:three-vertex-classified-profiles}
  (0,0,3),\qquad
  (1,1,3),\qquad
  (2,2,3);
\end{equation}
the Markov family in \cite[Theorem~1.8]{GaddisRogalski2021}, realized by
\cite[Proposition~2.4]{GaddisRogalski2021}, has $q_A(1)=M^T-M\neq0$, hence $a_1=0$ and again
the profile $(0,0,3)$.  Under Hypothesis~1.4 the classification therefore removes the two
remaining parity survivors: $(3,3,3)$ is excluded intrinsically by
Proposition~\ref{prop:minimal-exponent-bound}, while the status of $(1,3,3)$ outside
Hypothesis~1.4 remains open.  A degree-one example would require superpotential degree at
least five; Gaddis--Rogalski \cite[paragraph following Hypothesis~1.4]{GaddisRogalski2021}
expect the restriction $s\in\{3,4\}$ to be extraneous, and weighted examples are not covered
by their classification.

Finally, the Smith language shortens one of the growth exclusions in that program.  For the
candidate of \cite[Lemma~6.10]{GaddisRogalski2021},
\[
  M=
  \begin{pmatrix}
    2&3&0\\
    1&0&2\\
    4&0&0
  \end{pmatrix},
  \qquad
  p(x)=I_3-Mx+M^Tx^2-x^3I_3,
\]
one has
\[
  \det p(x)=-(x^2+x+1)(x+1)^2(x-1)^5,
  \qquad
  \operatorname{rank}p(1)=2.
\]
Thus $\delta_1(p)=\delta_2(p)=0$ and $\delta_3(p)=5$, so the local profile is $(0,0,5)$.
Equation~\eqref{eq:cy-gk-determinantal-order} would then force GK dimension $5$ in the
finite-growth branch, contradicting \cite[Hypothesis~1.4]{GaddisRogalski2021}.  The original
proof obtains the same cancellation information from the adjugate of $p$; the local Smith
profile packages it directly.

\subsection{Realization of a starred four-vertex candidate}
\label{ex:four-vertex-realization}

We now realize the first starred case of superpotential degree four in \cite[Theorem~4.10
and the paragraph following it]{GaddisLamkinNguyenWright2024}.  It has Nakayama permutation
$P=(12)(34)$ and incidence matrix
\[
  M=
  \begin{pmatrix}
    1&0&1&0\\
    0&1&0&1\\
    0&1&0&1\\
    1&0&1&0
  \end{pmatrix}.
\]
Gaddis--Lamkin--Nguyen--Wright \cite[Theorem~4.10 and the paragraph following
it]{GaddisLamkinNguyenWright2024} leave this case unresolved and state that they expect it
not to occur.  The construction below shows that it does.  The prospective homological
denominator $p(x)=I_4-Mx+PM^Tx^3-Px^4$ has local profile $(0,1,2,3)$; its $P=+1$ sector has
exponents $(1,3)$ and its $P=-1$ sector has exponents $(0,2)$, so the equivariant parity
restrictions are compatible with the candidate.

Consider
\[\begin{tikzpicture}
    \node(1) at (0,1) {$1$};
    \node(2) at (4,1) {$2$};
    \node(3) at (2,2) {$3$};
    \node(4) at (2,0) {$4$};

    \path[->]
    (1) edge[loop left] node {$a$} ()
    (2) edge[loop right] node {$b$} ()
    (1) edge node[above] {$c$} (3)
    (2) edge node[below] {$d$} (4)
    (3) edge node[above] {$e$} (2)
    (3) edge[bend left=20] node[right] {$f$} (4)
    (4) edge node[below] {$g$} (1)
    (4) edge[bend left=20] node[left] {$h$} (3);
\end{tikzpicture}\]
and set
\begin{align*}
  W_0={}&aace+aceb+cebb+ebbd+bbdg+bdga+dgaa+gaac\\
        &+cfhe+fhed+hedh+edhf+dhfg+hfgc+fgcf+gcfh.
\end{align*}
Its eight left arrow derivatives are linearly independent.  Put
\[
  R=\operatorname{span}_\K\{\partial_\gamma W_0:\gamma\in Q_1\},
  \qquad
  A_{W_0}=T_E(V)/(R).
\]

\begin{lemma}[Degree-four overlap]
  \label{lem:starred-degree-four-overlap}
  Let $W_4=\operatorname{span}_\K\{e_iW_0:1\leq i\leq4\}$.  Then
  \[
    W_4=(R\otimes_EV)\cap(V\otimes_ER)
    \subset V^{\otimes_E4}.
  \]
\end{lemma}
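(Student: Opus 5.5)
We prove the two inclusions separately: $W_4\subseteq(R\otimes_EV)\cap(V\otimes_ER)$ by expanding the superpotential, and equality by a dimension count.

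\emph{The inclusion $W_4\subseteq(R\otimes_EV)\cap(V\otimes_ER)$.} First check that $W_0$ is cyclically invariant. Its sixteen monomials form two families. The first consists of the rotations of $aace$ interleaved with the $b$/$d$/$g$ paths. The second consists of the four rotations of each of $cfhe$ and $hedh$ type, closed under cyclic permutation. Then, for each vertex $i$, expand $e_iW_0$ according to the last arrow:
\[
  e_iW_0=\sum_{\gamma:\,t(\gamma)=i}(\partial_\gamma W_0)\gamma ,
\]
where $\partial_\gamma$ is the right derivative. By cyclic invariance this right derivative agrees with the left derivative up to the cyclic identification, so it lies in $R$. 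This shows $e_iW_0\in R\otimes_EV$. Expanding instead by the first arrow gives $e_iW_0=\sum_{s(\gamma)=i}\gamma\,\partial_\gamma W_0\in V\otimes_ER$. Both expansions are routine to verify term by term from the sixteen monomials.

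\emph{Dimension count.} For the reverse inclusion, note first that $\dim W_4=4$. Each $e_iW_0$ is a nonzero sum of monomials starting at $i$, and these monomials are disjoint for different $i$. Next we bound $\dim\bigl((R\otimes_EV)\cap(V\otimes_ER)\bigr)\leq4$, working vertex by vertex.

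Fix $i$. An element of $e_i\bigl((R\otimes_EV)\cap(V\otimes_ER)\bigr)$ can be written as $\sum_\gamma r_\gamma\gamma$ with $r_\gamma\in R$. Here each $r_\gamma$ is a combination of the $\partial_\delta W_0$ with matching endpoints, since the eight derivatives are linearly independent. The same element must also equal $\sum\gamma' r'_{\gamma'}$. Comparing coefficients of the degree-four paths gives a linear system in the unknown coefficients. I would set it up using the quiver data: vertex $1$ has outgoing arrows $a,c$ and incoming $a,g$, and similarly for the other vertices. I would then solve it explicitly and show that the solution space is one-dimensional at each vertex, hence spanned by $e_iW_0$.

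\emph{Equivalent formulation and expected obstacle.} Equivalently, this is the statement that the complex
\[
  0\to W_4\to R\otimes_EV\oplus V\otimes_ER\to\cdots
\]
is exact at the first term. It should follow from the matrix Hilbert series identity $p(x)C_A(x)=I$ in degree four. The degree-$4$ coefficient of the prospective denominator contributes $-P$, which corresponds to one-dimensional $e_iW_4e_{\mu(i)}$. I would use this only as a consistency check, since it presupposes the Koszul-type exactness that the lemma is meant to feed into.

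The main obstacle is this linear-algebra computation at each vertex. The relations $\partial_\gamma W_0$ are sums of several cubic paths, so coincidences among monomials in $R\otimes V$ and $V\otimes R$ have to be tracked carefully. I would handle it by grouping paths by their (source, target) pair, which is compatible with the Nakayama pairing $i\leftrightarrow\mu(i)$. This splits the system into small blocks, each of which can be checked by hand or by computer.
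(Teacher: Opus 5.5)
Your overall route is the paper's: expand $W_0$ by first and by last arrow to get $W_4\subseteq(R\otimes_EV)\cap(V\otimes_ER)$, then compare bimodule components for the reverse inclusion. The justification you give for the last-arrow expansion is false, however. $W_0$ is not cyclically invariant, and its monomials are not cycles at all: each runs from $i$ to $P(i)$. For example, $aace$ goes $1\to2$, and its rotation $acea$ is not even a path. What is true, and what the paper uses, is the \emph{twisted} identity $W_0=\sum_\gamma\gamma\,r_\gamma=\sum_\gamma r_\gamma\,\tau(\gamma)$, with $r_\gamma=\partial_\gamma W_0$ and $\tau=(a\,b)(c\,d)(e\,g)(f\,h)$. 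So the right derivative at $\tau(\gamma)$ is $r_\gamma$, and in your expansion of $e_iW_0$ the last arrow ends at $P(i)$, not at $i$. Your fallback of checking term by term would uncover this, but ``cyclic invariance'' cannot be the reason.

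The reverse inclusion is only announced, and you expect more bookkeeping than there is. Each $r_\gamma$ spans the one-dimensional component $e_{t(\gamma)}Re_{P(s(\gamma))}$. Hence $R\otimes_EV$ lives in the components $e_i(-)e_j$ with $(i,j)\in\{(1,2),(1,4),(2,1),(2,3),(3,2),(3,4),(4,1),(4,3)\}$. Likewise $V\otimes_ER$ lives in those with $(i,j)\in\{(1,2),(1,3),(2,1),(2,4),(3,1),(3,4),(4,2),(4,3)\}$. The two overlap only in the components $e_i(-)e_{P(i)}$, and there each side is two-dimensional on the same four paths. For $i=1$, the spans $\operatorname{span}\{r_ab,\,r_ge\}=\operatorname{span}\{aceb+cebb,\;aace+cfhe\}$ and $\operatorname{span}\{ar_a,\,cr_c\}=\operatorname{span}\{aace+aceb,\;cebb+cfhe\}$ meet exactly in $\K(aace+aceb+cebb+cfhe)=\K e_1W_0$. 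The other three vertices have the same shape. This is precisely the paper's argument, and with it your bound $\dim\leq4$, hence equality, is immediate. Without it, your proposal leaves the actual content of the lemma unverified.
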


\begin{proof}
  Writing $r_\gamma=\partial_\gamma W_0$, direct expansion gives
  \[
    W_0=\sum_{\gamma\in Q_1}\gamma r_\gamma
    =\sum_{\gamma\in Q_1}r_\gamma\tau(\gamma),
    \qquad
    \tau=(a\,b)(c\,d)(e\,g)(f\,h),
  \]
  so $W_4$ lies in the intersection.  The two tensor products have common $E$-bimodule
  support only in the four components $e_i(-)e_{P(i)}$.  In each component their natural
  $2$-dimensional spans meet in the line $\K e_iW_0$, as follows immediately by comparing
  the four paths in the two bases.  Summing over the vertices proves the claim.
\end{proof}

We identify $A_{W_0}$ with an iterated skew-group algebra.  Let
\[
  \mathsf D=D(0,-1)
  =\K\langle x,y\rangle/(x^2y+yx^2,\;xy^2+y^2x).
\]
By \cite[Lemma~1.1]{ChenKirkmanZhang2017}, $\mathsf D$ is a connected graded noetherian
Artin--Schelter regular domain \cite{ArtinSchelter1987} of global dimension $3$ and GK
dimension $3$, hence skew Calabi--Yau of dimension $3$ by
\cite[Lemma~1.2]{ReyesRogalskiZhang2014}.

Let the first copy of $C_2$ act by $x\mapsto x$ and $y\mapsto-y$, and put
$\mathsf B=\mathsf D\#\K C_2$.  For the primitive idempotents $e_\pm$ of $\K C_2$, write
$X_\pm=e_\pm x=xe_\pm$ and $Y_\pm=e_\pm y=ye_\mp$.  Then $X_\pm$ are loops and
$Y_\pm:e_\pm\to e_\mp$, with relations
\begin{equation}
\label{eq:B-presentation}
  X_\pm^2Y_\pm+Y_\pm X_\mp^2,
  \qquad
  X_\pm Y_\pm Y_\mp+Y_\pm Y_\mp X_\pm .
\end{equation}
There is a second graded involution of $\mathsf B$, fixing the idempotents, which fixes
$X_+,Y_+$ and negates $X_-,Y_-$.  Put $\mathsf A=\mathsf B\#\K C_2$, and let $f_\pm$ be the
primitive idempotents of the second group algebra.

\begin{lemma}[Iterated smash identification]
  \label{lem:iterated-smash-identification}
  There is a graded algebra isomorphism $A_{W_0}\cong\mathsf A$ given on
  vertices by
  \[
    e_1\mapsto e_+f_+,
    \quad e_2\mapsto e_+f_-,
    \quad e_3\mapsto e_-f_+,
    \quad e_4\mapsto e_-f_-,
  \]
  and on arrows by
  \[
    \begin{alignedat}{4}
      a&\mapsto X_+f_+,&\quad b&\mapsto X_+f_-,&\quad
      c&\mapsto Y_+f_+,&\quad d&\mapsto Y_+f_-,\\
      e&\mapsto Y_-f_-,&
      f&\mapsto X_-f_-,&
      g&\mapsto Y_-f_+,&
      h&\mapsto X_-f_+ .
    \end{alignedat}
  \]
\end{lemma}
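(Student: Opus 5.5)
The plan is to present $\mathsf A$ as a quiver algebra with relations and then match this presentation with $A_{W_0}$. First I will write down the standard presentation of a skew-group algebra by a diagonalizable cyclic action. If $\Lambda=T_E(V)/(I)$ and $C_2$ acts on $\Lambda$ fixing the idempotents of $E$ and acting diagonally on the arrows, then $\Lambda\#\K C_2$ is the quiver algebra whose vertices are the pairs (old vertex, character idempotent $f_\pm$). Each old arrow $\gamma$ of sign $\epsilon_\gamma$ gives two new arrows $\gamma f_\pm$, and $\gamma f_\pm=f_{\pm\epsilon_\gamma}\gamma$. Each homogeneous relation of $I$ gives two relations, one for each starting character, obtained by inserting the $f$-labels along the path. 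The presentation \eqref{eq:B-presentation} of $\mathsf B$ is exactly this recipe applied to $\mathsf D$, so I will simply apply it once more to $\mathsf B$ with the second involution.

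Second, I will read off the quiver of $\mathsf A$. There are four vertices $e_\pm f_\pm$ and eight arrows $X_\pm f_\pm$, $Y_\pm f_\pm$. Since $X_+,Y_+$ are fixed and $X_-,Y_-$ are negated, $X_+f_\pm$ and $Y_+f_\pm$ keep the $f$-label, while $X_-f_\pm$ and $Y_-f_\pm$ flip it. I will check that the source and target of each of the eight listed images agree with the quiver drawn above. For example, $c=Y_+f_+$ goes from $e_+f_+$ to $e_-f_+$, that is, from $1$ to $3$. Likewise $h=X_-f_+$ is a map between $3$ and $4$, and so on. One caveat is the direction convention: the composition convention (left versus right modules) has to be fixed so that $Y_+=e_+y$ really points $e_+\to e_-$. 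I will fix that convention once and then verify all eight arrows.

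Third, I will compare relations. The two-sided ideal of $\mathsf A$ is generated by the eight lifts of the four relation families in \eqref{eq:B-presentation}, one for each starting $f$-character. I will compute the eight cyclic derivatives $\partial_\gamma W_0$. For instance, $\partial_a W_0=ace+gac+\dots$ after cyclically rotating $a$ to the front; for the paper's left-derivative convention this reads $ace+dga$ up to ordering, which corresponds to $X_+^2Y_+$-type terms. I will then check that under the proposed map each $\partial_\gamma W_0$ goes to $\pm$ one of these eight lifted relations, where the sign is absorbed by rescaling. Counting is straightforward: both sides have eight quadratic-in-paths cubic relations, and the paper already asserts that the $\partial_\gamma W_0$ are linearly independent. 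A bijection of spanning sets of the relation spaces $R\subset V^{\otimes_E3}$ then gives equal ideals, so the arrow map induces a graded isomorphism $T_E(V)/(R)\to\mathsf A$. Its inverse is defined in the same way, since the arrow assignment is a bijection of arrows compatible with the vertices.

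The main obstacle is the bookkeeping in the third step. The terms $X_\pm Y_\pm Y_\mp+Y_\pm Y_\mp X_\pm$ involve both $X_+$ and $X_-$ together with label changes, and matching these against the sixteen monomials of $W_0$ is where sign and orientation errors are most likely. In particular the plus sign in $D(0,-1)$ must agree with the all-plus superpotential, which may require rescaling some arrows by $-1$ in the isomorphism if the straight substitution produces a minus sign. If that happens, I will first try modifying the map by signs on $e,g$ (the $Y_-$ images), because these are the arrows whose two occurrences in each relation have mismatched labels.
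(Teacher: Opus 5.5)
Your route is the paper's route. You read off the quiver of $\mathsf A$ from the rule that an eigenarrow $\alpha$ with eigenvalue $\lambda$ gives $\alpha f_\psi=f_{\lambda\psi}\alpha f_\psi$ from $(s(\alpha),\lambda\psi)$ to $(t(\alpha),\psi)$. You then split the four relations \eqref{eq:B-presentation} into eight $f_\pm$-components and match them with the eight $\partial_\gamma W_0$. Your first two steps are fine.

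The third step carries the whole content of the lemma, and the one relation you actually compute there is wrong. Cyclic derivatives are not the relevant objects: $W_0$ is a twisted superpotential with components in $e_i(-)e_{P(i)}$, and cyclic rotation mixes these components. The paper's left derivative $\partial_aW_0$ collects the terms beginning with $a$, namely $aace$ and $aceb$, so $\partial_aW_0=ace+ceb$. Your expression $ace+dga$ is not even vertex-homogeneous: $ace$ runs from $1$ to $2$, while $dga$ runs from $2$ to $1$. In fact $dga$ comes from deleting the final $a$ of $dgaa$, and it belongs to $\partial_bW_0=bdg+dga$. Nor is $\partial_aW_0$ of $X_+^2Y_+$ type; it is the $f_-$-component of $X_+Y_+Y_-+Y_+Y_-X_+$. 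Matching against a wrong pairing is exactly what would produce the sign problems you anticipate.

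Do the computation with one rule: in $\alpha_1\cdots\alpha_kf_\psi$, the $f$-label of each letter is $\psi$ times the product of the eigenvalues of the letters to its right. The substitution then matches exactly:
\[
\begin{aligned}
(X_+Y_+Y_-+Y_+Y_-X_+)f_-&=ace+ceb=\partial_aW_0, &\quad (X_+Y_+Y_-+Y_+Y_-X_+)f_+&=bdg+dga=\partial_bW_0,\\
(X_-^2Y_-+Y_-X_+^2)f_-&=fhe+ebb=\partial_cW_0, &\quad (X_-^2Y_-+Y_-X_+^2)f_+&=hfg+gaa=\partial_dW_0,\\
(X_+^2Y_++Y_+X_-^2)f_-&=bbd+dhf=\partial_eW_0, &\quad (X_+^2Y_++Y_+X_-^2)f_+&=aac+cfh=\partial_gW_0,\\
(X_-Y_-Y_++Y_-Y_+X_-)f_-&=hed+gcf=\partial_fW_0, &\quad (X_-Y_-Y_++Y_-Y_+X_-)f_+&=fgc+edh=\partial_hW_0.
\end{aligned}
\]

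No signs can arise. Moving an idempotent past an eigenarrow changes only labels, never coefficients: $\alpha f_\psi=f_{\lambda\psi}\alpha$, and likewise $e_\pm y=ye_\mp$. So the $+$ signs of $D(0,-1)$ pass unchanged to every component.

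This matters for the statement. Your fallback of negating the images of $e$ and $g$ would give a different map, so it would not prove the lemma as stated. With the table in hand, your closing argument finishes the proof exactly as in the paper: the arrow bijection carries one spanning set of relations onto the other, so the ideals coincide. Linear independence of the derivatives is not needed for this.
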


\begin{proof}
  If $\alpha$ is an eigenarrow of the second involution with eigenvalue
  $\lambda\in\{\pm1\}$, then $\alpha f_\psi$ has source $(s(\alpha),\lambda\psi)$ and target
  $(t(\alpha),\psi)$.  The displayed assignment therefore reproduces the quiver with
  incidence matrix $M$.  The four relations in \eqref{eq:B-presentation} split into eight
  $f_\pm$-components, which become exactly the eight derivatives $\partial_\gamma W_0$.  For
  example, $(X_+^2Y_++Y_+X_-^2)f_+=aac+cfh=\partial_gW_0$.  Thus the two path-algebra
  presentations have the same relation ideal.
\end{proof}

\begin{theorem}[Realization of the starred candidate]
  \label{thm:starred-realization}
  The algebra $A_{W_0}$ is twisted Calabi--Yau of dimension $3$, has $\GKdim A_{W_0}=3$, and
  has type $(M,(12)(34),4)$.  In particular, the first starred case of
  \cite[Theorem~4.10]{GaddisLamkinNguyenWright2024} is realizable.
\end{theorem}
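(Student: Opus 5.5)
Via Lemma~\ref{lem:iterated-smash-identification} it suffices to prove everything for $\mathsf A=(\mathsf D\#\K C_2)\#\K C_2$, so the plan is to transport twisted Calabi--Yau-ness through the two smash products.

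\textbf{First smash product.} $\mathsf D=D(0,-1)$ is connected graded, noetherian, AS regular of global and GK dimension $3$, hence skew Calabi--Yau of dimension $3$. The first $C_2$ acts by graded automorphisms, and the characteristic is zero, so $|C_2|$ is invertible. I would apply \cite[Theorem~4.1]{ReyesRogalskiZhang2014} to conclude that $\mathsf B=\mathsf D\#\K C_2$ is twisted Calabi--Yau of dimension $3$. Since $\mathsf B\cong\mathsf D\otimes\K C_2$ as graded vector spaces, $\GKdim\mathsf B=3$. The algebra $\mathsf B$ is no longer connected; it is elementary with two vertices $e_\pm$.

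\textbf{Second smash product.} The second involution fixes $e_\pm$ and respects the relations \eqref{eq:B-presentation}. Indeed, each relation is homogeneous: the sign it picks up is $(-1)$ raised to the number of minus-indexed letters, and this number has constant parity within each relation (for example $X_+^2Y_+$ and $Y_+X_-^2$ both have an even count). Here I would use Le Meur's smash-product theorem \cite{LeMeur2019}, which extends the Reyes--Rogalski--Zhang result to nonconnected elementary (quiver) algebras with a finite group acting by graded automorphisms of invertible order. This gives that $\mathsf A$ is twisted Calabi--Yau of dimension $3$ with $\GKdim\mathsf A=\GKdim\mathsf B=3$, again because $\mathsf A\cong\mathsf B\otimes\K C_2$ as graded spaces.

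\textbf{Reading off the type.} By Lemma~\ref{lem:iterated-smash-identification}, $A_{W_0}\cong\mathsf A$. The incidence matrix of the quiver is $M$ by inspection. The relations are the derivatives of the degree-$4$ twisted superpotential $W_0$, and Lemma~\ref{lem:starred-degree-four-overlap} identifies the top term of the minimal resolution of the vertex simples as $W_4$, sitting in degree $4$. Hence the AS-index is $4I_4$, i.e.\ the superpotential degree is $4$. The Nakayama permutation can be read from the twist $\tau=(a\,b)(c\,d)(e\,g)(f\,h)$ in the proof of that lemma. The intersection $W_4$ lives in the components $e_iV^{\otimes4}e_{P(i)}$, which forces the permutation $P=(12)(34)$. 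As a cross-check, $\tau$ swaps $a\leftrightarrow b$, and $a,b$ are the loops at vertices $1,2$.

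An alternative is to verify directly, via \cite{ReyesRogalski2019}, that the complex
\[0\to W_4\otimes_E A\to R\otimes_E A\to V\otimes_E A\to A\to E\to0\]
is exact. Exactness would follow from a matrix Hilbert series identity $C_A(x)\,p(x)=I$ with $p(x)=I_4-Mx+PM^Tx^3-Px^4$, where $C_A$ is computed from the smash-product description $\mathsf A\cong\mathsf D\otimes\K C_2\otimes\K C_2$ and the Hilbert series $(1-x)^{-1}(1-x^2)^{-1}\cdot\ldots$ of $\mathsf D$ split by characters. This would confirm the type independently.

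\textbf{Main obstacle.} The hardest step is the precise applicability of the smash-product theorem in the second step, since the second algebra is not connected. The fallback is to run the Hilbert series check above and invoke the criterion of \cite{ReyesRogalski2019} that a superpotential algebra with the right twisted Koszul-type complex and the correct Hilbert series is twisted Calabi--Yau. A secondary difficulty is pinning down the orientation convention for $P$, that is, whether it is $(12)(34)$ or its inverse. This is harmless here, because $(12)(34)$ is an involution.
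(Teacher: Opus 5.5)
Your overall route is the paper's. Smash-product theorems give the Calabi--Yau property: the paper applies \cite[Proposition~7.3.1]{LeMeur2019} to both $C_2$-actions, using that $\K C_2$ is Calabi--Yau of dimension $0$, and your use of Reyes--Rogalski--Zhang for the connected first step is an acceptable substitute. The graded decomposition $\mathsf A\cong\mathsf D\otimes\K C_2\otimes\K C_2$ gives $\GKdim=3$. The overlap lemma together with the components $e_i(-)e_{P(i)}$ gives the type.

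The gap is in that last step. Lemma~\ref{lem:starred-degree-four-overlap} computes $(R\otimes_EV)\cap(V\otimes_ER)$. For an algebra with cubic minimal relations, this is only the degree-$4$ part of $\operatorname{Tor}_3^{A_{W_0}}(E,E)$. It is the entire top term of the minimal resolution of the vertex simples only when $A_{W_0}$ is $3$-Koszul, i.e.\ only when your complex $0\to W_4\otimes_EA\to R\otimes_EA\to V\otimes_EA\to A\to E\to0$ (with $A=A_{W_0}$) is exact. You assert this identification without argument, yet both the AS-index $4$ and the Nakayama permutation are read off from it.

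The paper closes this with \cite[Theorem~1.2]{GaddisRogalski2021}: a twisted Calabi--Yau algebra of dimension $3$ on a connected quiver with cubic minimal relations is $3$-Koszul. Alternatively, self-duality of the length-three minimal resolution matches the degree-$3$ generators of the second term with the degree-$1$ generators of the first. This forces the top term into degree $4$, where $\operatorname{Tor}_3$ is exactly the intersection.

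Relatedly, the smash-product theorems deliver skew Calabi--Yau in the ungraded sense. Before reasoning with graded minimal resolutions, you need \cite[Theorem~4.2]{ReyesRogalski2022}, as the paper uses, to pass to graded twisted Calabi--Yau.

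Your fallback is also incomplete as stated. Exactness of that complex at $A$ and at $V\otimes_EA$ is automatic. So the identity $C_A(x)p(x)=I$ only shows that the homology at $R\otimes_EA$ and the kernel of $W_4\otimes_EA\to R\otimes_EA$ have the same matrix Hilbert series. You would still need injectivity of that leftmost map to conclude exactness. Once the $3$-Koszul step is supplied, the fallback is unnecessary.
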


\begin{proof}
  The group algebra $\K C_2$ is Calabi--Yau of dimension zero.  Since $\mathsf D$ is skew
  Calabi--Yau of dimension $3$, \cite[Proposition~7.3.1]{LeMeur2019} applied to the two
  actions shows successively that $\mathsf B$ and $\mathsf A$ are skew Calabi--Yau of
  dimension $3$; being graded, they are then graded twisted Calabi--Yau of dimension $3$
  \cite[Theorem~4.2]{ReyesRogalski2022}.  Lemma~\ref{lem:iterated-smash-identification}
  therefore makes $A_{W_0}$ twisted Calabi--Yau of dimension $3$.

  The quiver is connected and the minimal relations are cubic, so
  \cite[Theorem~1.2]{GaddisRogalski2021} makes $A_{W_0}$ $3$-Koszul.  Its degree-four Koszul
  syzygy is $(R\otimes_EV)\cap(V\otimes_ER)=W_4$ by
  Lemma~\ref{lem:starred-degree-four-overlap}.  The four summands $e_iW_0$ lie in the
  bimodule components $e_i(-)e_{P(i)}$; hence the top term of the minimal vertex-simple
  resolutions has Nakayama permutation $P=(12)(34)$ and internal degree four.  Thus the type
  is $(M,P,4)$.  Finally, $\mathsf A\cong\mathsf D\otimes\K C_2\otimes\K C_2$ as graded
  vector spaces, so $\GKdim\mathsf A=\GKdim\mathsf D=3$.
\end{proof}

By \eqref{eq:degree-one-denominator-rigidity}, the realized algebra has
\[
  \mathcal E_{A_{W_0}}(x)=I_4-Mx+PM^Tx^3-Px^4.
\]
Its local profile is therefore $(0,1,2,3)$, with exact-layer actions $\nu_1=1$, $\nu_2=-1$,
and $\nu_3=1$, exactly as predicted by Theorem~\ref{thm:equivariant-full-profile-parity}.

\subsection*{Use of large language models}

We used Large Language Models (LLMs) in three supporting roles in the preparation of this
paper.
\begin{enumerate}[(a)]

\item We used LLMs for copy editing. This includes suggestions concerning wording,
  organization, and presentation.  LLMs were never used as a source for verification of the
  arguments.
  
\item We used LLMs to help identify potentially relevant references.  We carefully sifted
  the suggestions and then checked every reference cited in the paper by hand.

\item We used LLMs for a directed and curated search for examples constrained by the
  theoretical results developed in the paper.  In particular, the three- and four-vertex
  examples in Section~\ref{sec:examples} were found through this process.  The resulting
  candidates were subsequently checked by using a sagemath program.

\end{enumerate}

The author takes full responsibility for the mathematical content of the
paper, including the correctness of all statements, arguments, computations,
and references.

\bibliographystyle{siam}
\bibliography{references}

\end{document}